\documentclass[12pt]{article}
\usepackage{times}
\usepackage{booktabs}
\usepackage{appendix}
\usepackage{pifont}
\usepackage{floatrow}
\floatsetup[table]{capposition=top}
\usepackage{caption}
\usepackage{mathrsfs}
\usepackage[fleqn]{amsmath}
\usepackage{amsfonts,amsthm,amssymb,mathrsfs,bbding}
\usepackage{txfonts}
\usepackage{graphics,multicol}
\usepackage{graphicx}
\usepackage{color}
\usepackage{caption}
\usepackage{subfigure}
\usepackage{amsmath,graphicx}
\captionsetup{
figurename=Fig.,
tablename=Tab.
}
\usepackage{cite}
\usepackage{latexsym,bm}
\usepackage{longtable}
\usepackage{supertabular}
\usepackage{enumerate}
\usepackage{amsmath}
\pagestyle{myheadings} \markright{} \textwidth 150mm \textheight 235mm \oddsidemargin=1cm
\evensidemargin=\oddsidemargin\topmargin=-1.5cm

\newtheorem{lem}{Lemma}[section]

\newtheorem{thm}{Theorem}[section]

\newtheorem{remark}{Remark}
\newtheorem*{claim}{Claim}

\theoremstyle{definition}

\addtocounter{section}{0}
\allowdisplaybreaks[4]

\begin{document}
\title{Some classes of connected signed graphs with girth $g$ and negative inertia index $\lceil\frac{g}{2}\rceil+1$ \footnote{This work is sponsored by the National Natural Science Foundation of China (No. 12561064).}}
\author{{BeiYan Liu,\ \   Fang Duan \footnote{Email: fangbing327@126.com}}\\[2mm]
\small School of Mathematics Science,  Xinjiang Normal University,\\
\small Urumqi, Xinjiang 830017, P.R. China}
\date{}
\maketitle {\flushleft\large\bf Abstract:}
Let $\Gamma$ be a signed graph. The number of negative eigenvalues of the adjacency matrix of $\Gamma$ is called the negative inertia index of $\Gamma$, which is denoted by
$i_-(\Gamma)$. The length of the shortest cycle contained in $\Gamma$ is called the girth of $\Gamma$, and it is denoted by $g$. In this paper, we give some classes of connected  signed graphs $\Gamma$ which satisfy the condition $i_-(\Gamma)=\lceil\frac{g}{2}\rceil+1$.

\begin{flushleft}
\textbf{Keywords:} Signed graphs; Negative inertia indices; Girth; Extremal graphs
\end{flushleft}
\textbf{AMS Classification:} 05C50

\section{Introduction}
Let $G=(V(G), E(G))$ be a graph consisting of vertices $V(G)=\{v_1, v_2, ..., v_n\}$ and edges $E(G)$. For $u, v\in V(G)$ and $S\subseteq V(G)$, $u\sim v$ denotes $uv\in E(G)$, $N_S(v)=\{u\in S\mid uv\in E(G)\}$ denotes the \emph{neighborhood} of $v$ in $S$, and $|N_S(v)|$ is called the \emph{degree} of $v$ in $S$, which is written as $d_S(v)$. In particular, if $S=V(G)$, then $N_S(v), d_S(v)$ can be replaced by $N(v), d(v)$, respectively. Following standard graph-theoretic conventions, we denote by $K_{n_1, \ldots, n_l}$ the \emph{complete multipartite graph} with $l$ parts of sizes $n_1, \ldots, n_l$, $K_{1,n}$ the \emph{star}, and the vertex of degree $n$ is called the \emph{central vertex} of $K_{1,n}$. $C_n$ and $P_n$ denote the \emph{cycle} and \emph{path} on $n$ vertices, respectively.

The \emph{adjacency matrix} $A(G)=(a_{ij})_{n\times n}$ of $G$ is defined as follows: $a_{ij}=1$ if $v_i$ is adjacent to $v_j$, and $a_{ij}=0$ otherwise. Owing to the real symmetry of $A(G)$, all its eigenvalues are real numbers, which can be ordered in non-increasing sequence as ($\lambda_1(G) \geq \lambda_2(G) \geq \cdots \geq \lambda_n(G)$ ); this sequence is commonly termed the spectrum of $G$. Three critical inertia parameters are derived from this spectrum: the positive inertia index $i_+(G)$ (count of positive eigenvalues), the negative inertia index $i_-(G)$ (count of negative eigenvalues), and the nullity $\eta(G)$ (multiplicity of the zero eigenvalue). These parameters occupy a pivotal position at the intersection of matrix theory and graph theory, with direct applications in analyzing the stability of chemical molecules (e.g.,predicting $\pi$-electronic structures of conjugated compounds) and providing algebraic criteria for graph classification. Consequently, they have remained a focal point of spectral graph theory research over the past three decades \cite{Fang.D1,Fang.D2, M.R.Oboudi3,GuiHai.Yu}.

A \emph{signed graph} $\Gamma=(G, \sigma)$ consists of $G$ (called its \emph{underlying graph}) and a sign function $\sigma: E(G)\rightarrow \{+, -\}$. If $\sigma(e)=+$ (resp., $\sigma(e)=-$) for each $e\in E(G)$, then we denote the signed graph by $\Gamma=(G, +)$ (resp., $\Gamma=(G, -))$. Given a subset $\{u_1, u_2, \ldots ,u_t\}=S\subseteq V(\Gamma)$, the subgraph of $\Gamma$ induced by $S$, written as $\Gamma[S]$, is defined to be the signed graph with vertex set $S$ and edge set $\{u_iu_j\in E(\Gamma)\mid u_i\in S$ and $u_j\in S\}$, where the sign of $u_iu_j$ in $\Gamma[S]$ is the same as it in $\Gamma$. The \emph{adjacency matrix} of $\Gamma=(G, \sigma)$ is defined as $A(\Gamma)=(a_{u,v}^\sigma)_{u,v\in V(G)}$, where $a_{u,v}^\sigma=\sigma(uv) \cdot 1$ if $uv\in E(G)$ and $a_{u,v}^\sigma=0$ otherwise. The eigenvalues of $A(\Gamma)$ are called the \emph{eigenvalues} of $\Gamma$. The inertia indices, nullity and other basic notations of signed graphs are similarly to that of simple graphs.

The \emph{sign} of a cycle $C^\sigma$ is defined by $sgn(C^\sigma)=\prod_{e\in E(C)}\sigma(e)$. If $sgn(C^\sigma)=+$ (resp., $sgn(C^\sigma)=-$), then $C^\sigma$ is said to be \emph{positive} (resp., \emph{negative}). A signed graph $\Gamma$ is said to be \emph{balanced} if all of its cycles are positive, and \emph{unbalanced} otherwise. In particular, an acyclic signed graph is balanced. A function $\theta: V(\Gamma)\rightarrow \{+, -\}$ is called a \emph{switching function} of $\Gamma$. Switching $\Gamma$ by $\theta$, we obtain a new signed graph $\Gamma^\theta=(G, \sigma^\theta)$ whose sign function $\sigma^\theta$ is defined by $\sigma^\theta(xy)=\theta(x)\sigma(xy)\theta(y)$ for any $xy\in E(G)$. Two signed graphs $\Gamma_1$ and $\Gamma_2$ are said to be \emph{switching equivalent}, denoted by $\Gamma_1 \sim \Gamma_2$, if there exists a switching function $\theta$ such that $\Gamma_2=\Gamma_1^{\theta}$.

Signed graphs were first formalized by Harary\cite{F.Harary}, and their spectral properties-combining the intuitive structure of graphs with the algebraic power of matrix analysis-have since emerged as a core research direction in spectral graph theory \cite{Fang.D,W.H.Haemers,M.Brunetti,Q.Wu}. Belardo et al. \cite{F.Belardo}\ provided a comprehensive overview of fundamental signed graph theory, including inertia indices calculation methods and girth-related properties, while proposing several open conjectures on signed graph structural characteristics that have guided subsequent research. Previously, Duan et.al \cite{Fang.D1,Fang.D2} investigated the inertia indices of simple connected graphs with fixed girth; in the field of signed graphs, Liu et.al\cite{L.D} systematically characterized the extremal connected signed graphs satisfying $i_-(\Gamma)=\lceil\frac{g}{2}\rceil-1$ and $i_-(\Gamma)=\lceil\frac{g}{2}\rceil$. In the parallel research thread of nullity, Suliman Khan \cite{K.S} has provided characterizations for signed graphs $\Gamma$ that meet the nullity conditions $\eta(\Gamma)=n(\Gamma)-g(\Gamma)$ and $ \eta(\Gamma)=n(\Gamma)-g(\Gamma)-2$. It has laid a significant foundation for the research on the correlation between the nullity and girth of signed graphs. Focusing on the relationship between the negative inertia index and girth of signed graphs, this paper further expands on the research findings of \cite{L.D}. It conducts in-depth studies on the signed graphs that satisfy the condition $i_-(\Gamma)= \lceil\frac{g}{2}\rceil+1$. This research not only supplements the research dimension of inertia index but also complements Suliman Khan's research on nullity, jointly enriching the theoretical system regarding the relationship between the core indices and girth of signed graphs.

The rest of the paper is organized as follows: Section 2 reviews essential basic concepts and existing lemmas; Section 3 presents the main theorem for the characterization of signed graphs with $i_-(\Gamma)=\lceil\frac{g}{2}\rceil+1$ along with its detailed proof.

\section{Preliminaries}
\begin{lem}[\cite{R.A.Horn}]\label{lem-2-0-0}
Let $A$ be a real matrix of order $n$ and $\lambda_1, \lambda_2, \ldots, \lambda_n$ be all eigenvalues of $A$. Then $det(A)=\lambda_1 \lambda_2\cdots\lambda_n$.
\end{lem}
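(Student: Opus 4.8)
The plan is to prove the identity via the characteristic polynomial of $A$, reducing the claim to a single evaluation at the origin. First I would set $p(\lambda)=\det(\lambda I_n - A)$, the characteristic polynomial of $A$, which is a monic polynomial of degree $n$ in $\lambda$. By definition the eigenvalues $\lambda_1,\lambda_2,\ldots,\lambda_n$ are precisely the roots of $p$, listed with algebraic multiplicity. Since we regard $A$ as a complex matrix (a real matrix need not have real eigenvalues), the Fundamental Theorem of Algebra guarantees that $p$ splits completely over $\mathbb{C}$, so that
\begin{equation*}
p(\lambda)=\prod_{i=1}^{n}(\lambda-\lambda_i).
\end{equation*}

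The key step is then to evaluate both expressions for $p$ at $\lambda=0$. On one hand, $p(0)=\det(-A)=(-1)^n\det(A)$, using that scaling an $n\times n$ matrix by $-1$ multiplies its determinant by $(-1)^n$. On the other hand, the factored form gives $p(0)=\prod_{i=1}^{n}(0-\lambda_i)=(-1)^n\lambda_1\lambda_2\cdots\lambda_n$. Equating the two expressions and cancelling the common factor $(-1)^n$ yields $\det(A)=\lambda_1\lambda_2\cdots\lambda_n$, as required.

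An alternative route, which I would keep in reserve, is via Schur triangularization: every complex square matrix is unitarily similar to an upper-triangular matrix $T$ whose diagonal entries are exactly the eigenvalues of $A$. Since the determinant is invariant under similarity and the determinant of a triangular matrix is the product of its diagonal entries, this again delivers $\det(A)=\prod_{i=1}^{n}\lambda_i$. The only genuinely delicate point in either approach --- and hence the main obstacle to a fully rigorous write-up --- is the bookkeeping over $\mathbb{C}$: one must ensure that ``all eigenvalues'' in the statement are counted with algebraic multiplicity and that complex eigenvalues are admitted, since a real matrix generally fails to be triangularizable (or diagonalizable) over $\mathbb{R}$. Once this convention is fixed, both arguments are routine, which is consistent with the result being quoted from \cite{R.A.Horn} rather than reproved in full here.
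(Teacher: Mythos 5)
Your proof is correct: evaluating the two expressions for the characteristic polynomial $p(\lambda)=\det(\lambda I_n-A)=\prod_{i=1}^n(\lambda-\lambda_i)$ at $\lambda=0$ and cancelling $(-1)^n$ is the standard argument, and your care about working over $\mathbb{C}$ and counting eigenvalues with algebraic multiplicity is exactly the right bookkeeping. Note, though, that the paper offers no proof of this lemma at all --- it is quoted directly from the reference \cite{R.A.Horn} --- so there is nothing to compare against; your write-up simply supplies the routine textbook argument that the citation stands in for.
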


\begin{lem}\label{lem-2-0}
(Sylvester's law of inertia) If two real symmetric matrices $A$ and $B$ are congruent, then they have the same positive (resp., negative)
inertia index, the same nullity.
\end{lem}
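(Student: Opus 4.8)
The plan is to prove this via the characterization of the inertia indices in terms of the dimensions of maximal subspaces on which the associated quadratic form is definite, and then to exploit the fact that congruence implements an invertible change of variables on that quadratic form. Recall that $A$ and $B$ being congruent means $B = P^\top A P$ for some nonsingular matrix $P$.

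First I would fix notation: for a real symmetric matrix $A$ of order $n$, write $q_A(x) = x^\top A x$ for the associated quadratic form, and define $i_+(A)$ to be the largest dimension of a subspace $W \subseteq \mathbb{R}^n$ such that $q_A(x) > 0$ for every nonzero $x \in W$. The first key step is to verify that this geometric quantity agrees with the number of positive eigenvalues. By the spectral theorem there is an orthogonal $Q$ with $Q^\top A Q = \mathrm{diag}(\lambda_1, \dots, \lambda_n)$; ordering the eigenvalues so that the first $p$ are positive, the span of the corresponding eigenvectors gives a $p$-dimensional positive subspace, so $i_+(A) \ge p$. Conversely, if $U$ denotes the span of the eigenvectors with nonpositive eigenvalues, then $\dim U = n - p$ and $q_A \le 0$ on $U$; any positive subspace $W$ must satisfy $W \cap U = \{0\}$, whence $\dim W + \dim U \le n$ and so $\dim W \le p$. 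This gives $i_+(A) = p$, and the same argument applied to $-A$ gives the analogous statement for $i_-(A)$.

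The second, and central, step is transport under congruence. Writing $B = P^\top A P$ with $P$ invertible, for any $x$ we have $q_B(x) = x^\top P^\top A P x = q_A(Px)$. Hence the invertible linear map $x \mapsto Px$ carries any subspace $W$ on which $q_B$ is positive to a subspace $PW$ of the same dimension on which $q_A$ is positive, and conversely. Therefore the maximal dimensions coincide, i.e.\ $i_+(A) = i_+(B)$; applying the identical reasoning to $-A$ and $-B$ (which are congruent via the same $P$) yields $i_-(A) = i_-(B)$. Finally, the nullity is handled either by noting $\eta = n - i_+ - i_-$ together with the two equalities just established, or more directly by observing that $\mathrm{rank}(P^\top A P) = \mathrm{rank}(A)$ since $P$ is nonsingular.

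The part I expect to require the most care is the equivalence between the geometric (maximal-definite-subspace) description of $i_+$ and the algebraic (eigenvalue-count) description, specifically the upper bound $\dim W \le p$: the transversality $W \cap U = \{0\}$ must be argued from the fact that a nonzero vector cannot simultaneously make $q_A$ strictly positive and lie in a subspace on which $q_A$ is nonpositive. Once this dimensional argument is in place, the congruence invariance is immediate from the change-of-variables identity $q_B(x) = q_A(Px)$, and the result follows.
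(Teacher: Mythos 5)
Your proposal is correct and complete. Note, however, that the paper offers no proof of this lemma at all: it is quoted as a classical fact (Sylvester's law of inertia, standard in matrix analysis, e.g.\ Horn--Johnson), so there is no internal argument to compare yours against. What you have written is the standard textbook proof, and every step checks out: the variational characterization of $i_+(A)$ as the maximal dimension of a subspace on which $q_A$ is positive definite (with the upper bound $\dim W\le p$ obtained from $W\cap U=\{0\}$, which you justify correctly --- a nonzero $x\in W\cap U$ would satisfy both $q_A(x)>0$ and $q_A(x)\le 0$); the transport of positive subspaces under the invertible change of variables $x\mapsto Px$ coming from $q_B(x)=q_A(Px)$, which gives $i_+(A)=i_+(B)$; the reduction of the negative case to the positive case via the congruence $-B=P^{\top}(-A)P$; and the nullity either from $\eta=n-i_+-i_-$ or from rank invariance under multiplication by nonsingular matrices. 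The one point you flagged as delicate (the dimension count) is exactly where the substance lies, and you handled it correctly, so the proof would stand as a self-contained justification of the lemma the paper merely cites.
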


\begin{lem}[\cite{L.D}]\label{lem-2-5}
Let $\Gamma=(G, \sigma)$ be a signed graph with girth $g$ and $C_g^\sigma$ be a shortest signed cycle of $\Gamma$. Suppose $y,y'\in V(C_g^\sigma)$ and there exists an induced path $P^\sigma$ of length $t$ from $y$ to $y'$ vertex-disjoint to $C_g^\sigma$. Then $\lceil\frac{g}{2}\rceil\leq t$. Furthermore, if $g\geq 5$, then each vertex in $V(\Gamma)\setminus V(C_g^\sigma)$ is adjacent to at most one vertex of $V(C_g^\sigma)$.
\end{lem}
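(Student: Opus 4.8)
The plan is to exploit the minimality of the girth by building short cycles out of the given path together with arcs of $C_g^\sigma$, and then invoking the fact that every cycle in $\Gamma$ has length at least $g$. I read the phrase ``vertex-disjoint to $C_g^\sigma$'' as saying that the \emph{internal} vertices of $P^\sigma$ avoid $V(C_g^\sigma)$, so that $P^\sigma$ meets the cycle in exactly its two (distinct) endpoints $y,y'$.

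For the first assertion I would remove $y$ and $y'$ from $C_g^\sigma$, splitting the cycle into two arcs of lengths $a$ and $b$ with $a+b=g$ and $a\le b$; since $a$ is an integer this forces $a\le\lfloor\frac{g}{2}\rfloor$. Concatenating the shorter arc (length $a$) with $P^\sigma$ yields a closed walk whose only repeated vertices are $y$ and $y'$, because the interior of $P^\sigma$ is disjoint from $V(C_g^\sigma)$; hence it traces a genuine cycle of length $a+t$ in $\Gamma$. By the definition of girth we get $a+t\ge g$, and therefore $t\ge g-a\ge g-\lfloor\frac{g}{2}\rfloor=\lceil\frac{g}{2}\rceil$, which is exactly the claimed bound. (Possible chords between $P^\sigma$ and the arc are harmless: they could only create an even shorter cycle, tightening the inequality.)

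For the ``furthermore'' part I would argue by contradiction: suppose $g\ge 5$ and some $w\in V(\Gamma)\setminus V(C_g^\sigma)$ is adjacent to two distinct vertices $y,y'$ of $C_g^\sigma$. Then the path $y\,w\,y'$ has length $2$ and is internally disjoint from $C_g^\sigma$, so applying the first part with $t=2$ gives $\lceil\frac{g}{2}\rceil\le 2$, i.e. $g\le 4$, contradicting $g\ge 5$. Equivalently, the shorter arc from $y$ to $y'$ (of length $a\le\lfloor\frac{g}{2}\rfloor$) together with the edges $wy$ and $wy'$ is a cycle of length $a+2\ge g$, forcing $\lfloor\frac{g}{2}\rfloor\ge a\ge g-2$, which is impossible once $g\ge 5$. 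Since $w$ was arbitrary, each external vertex has at most one neighbour on $C_g^\sigma$.

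The argument is at bottom a one-line girth comparison, so I do not expect a deep obstacle. The two places that demand care are (i) verifying that the arc-plus-path concatenation is a bona fide cycle rather than a walk with repeated vertices, which is precisely where the internal-disjointness hypothesis is consumed; and (ii) the floor/ceiling bookkeeping $g-\lfloor\frac{g}{2}\rfloor=\lceil\frac{g}{2}\rceil$ together with the threshold $g-2>\lfloor\frac{g}{2}\rfloor\iff g>4$, which is what makes $g\ge 5$ the sharp cutoff in the second statement. The signs $\sigma$ play no role in either part, since both bounds depend only on the cycle structure of the underlying graph.
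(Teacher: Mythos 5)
Your proof is correct. There is nothing in the paper to compare it against: the lemma is imported from \cite{L.D} and stated without proof here, so your girth-comparison argument (shorter arc of length $a\le\lfloor\frac{g}{2}\rfloor$ concatenated with the path gives a cycle of length $a+t\ge g$, hence $t\ge g-\lfloor\frac{g}{2}\rfloor=\lceil\frac{g}{2}\rceil$) stands on its own and is the natural one. One small point of care: when you ``apply the first part with $t=2$'' to the path $y\,w\,y'$, that path need not be induced (e.g.\ when $y\sim y'$ on the cycle), so the lemma as literally stated does not apply; but since your proof of the first part never uses inducedness, and since your second, direct argument (shorter arc plus the edges $wy$, $wy'$ yields a cycle of length $a+2\ge g$, impossible for $g\ge 5$) bypasses the issue entirely, the proof is complete as written.
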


\begin{lem}[\cite{G.H.Yu}]\label{lem-2-2}
Let $\Gamma$ be a signed graph containing a pendant vertex $u$, and let $\Gamma'$ be the induced signed subgraph of $\Gamma$ obtained by deleting $u$ together with the vertices adjacent to it. Then $i_+(\Gamma)=i_+(\Gamma')+1$ and $i_-(\Gamma)=i_-(\Gamma')+1$.
\end{lem}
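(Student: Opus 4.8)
The plan is to exploit the block structure that a pendant vertex forces on the adjacency matrix, and then let Sylvester's law of inertia (Lemma~\ref{lem-2-0}) do the accounting after a single congruence reduction. Since $u$ is pendant it has a unique neighbor, say $v$, so $\Gamma'$ is obtained by deleting exactly the two vertices $u$ and $v$. First I would order the vertices of $\Gamma$ so that $u$ comes first, $v$ second, and the remaining $n-2$ vertices last. Writing $a=\sigma(uv)\in\{+1,-1\}$ and collecting the signs of the edges from $v$ to the remaining vertices into a row vector $b$, the adjacency matrix takes the form
\begin{equation*}
A(\Gamma)=\begin{pmatrix} 0 & a & \mathbf{0} \\ a & 0 & b \\ \mathbf{0}^{\top} & b^{\top} & C \end{pmatrix},
\end{equation*}
where $C=A(\Gamma')$. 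The structural fact that makes everything work is that the row (and column) of $u$ is zero apart from the single entry $a$ in the position of $v$; this is precisely the hypothesis that $u$ is pendant.

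Next I would use this zero pattern to eliminate the block $b$ by symmetric (hence congruence) operations. For each remaining vertex $w$ with $b_w\neq 0$, subtract $\tfrac{b_w}{a}$ times the row of $u$ from the row of $w$ and perform the matching column operation. Because the only nonzero entry of the $u$-row lies in the column of $v$, this replaces the $(w,v)$ entry $b_w$ by $b_w-\tfrac{b_w}{a}\,a=0$, while every entry of the $C$ block is left untouched. After carrying this out for all such $w$ and symmetrising, $A(\Gamma)$ is transformed by congruence into the block-diagonal matrix
\begin{equation*}
\begin{pmatrix} 0 & a \\ a & 0 \end{pmatrix}\oplus C .
\end{equation*}

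Finally I would read off the inertia. The leading block $M=\left(\begin{smallmatrix}0&a\\a&0\end{smallmatrix}\right)$ satisfies $\det M=-a^{2}=-1<0$, so by Lemma~\ref{lem-2-0-0} (or directly, its eigenvalues are $\pm1$) it contributes exactly one positive and one negative eigenvalue and no zero eigenvalue. Since congruent real symmetric matrices share positive and negative inertia indices (Lemma~\ref{lem-2-0}), it follows that $i_+(\Gamma)=i_+(M)+i_+(C)=1+i_+(\Gamma')$ and $i_-(\Gamma)=i_-(M)+i_-(C)=1+i_-(\Gamma')$, as claimed.

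I do not anticipate a genuine obstacle: this is a clean congruence argument. The one point that must be checked with care — and where the pendant hypothesis is indispensable — is that the elimination of the $b$-block does not disturb the $C$ block; this holds exactly because the row and column of $u$ vanish outside the single $(u,v)$ position. If one prefers to bypass explicit row operations, the same conclusion follows from the Schur-complement identity: writing $A(\Gamma)=\left(\begin{smallmatrix} M & B \\ B^{\top} & C\end{smallmatrix}\right)$ with $B=\left(\begin{smallmatrix}\mathbf{0}\\ b\end{smallmatrix}\right)$, the block $M$ is invertible and the Schur complement $C-B^{\top}M^{-1}B$ equals $C$, since $B^{\top}M^{-1}B$ vanishes for the same reason that the $u$-row of $B$ is zero.
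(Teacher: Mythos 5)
The paper does not prove this lemma at all: it is imported as a known result from \cite{G.H.Yu}, so there is no in-paper proof to compare yours against. Your argument is correct and complete on its own terms. The pendant vertex forces exactly the block structure you write down; the congruence elimination (equivalently, your Schur-complement observation that $B^{\top}M^{-1}B=0$ because the $u$-row of $B$ vanishes) leaves $C=A(\Gamma')$ untouched; and Sylvester's law of inertia (Lemma \ref{lem-2-0}) together with the fact that $M=\left(\begin{smallmatrix}0&a\\a&0\end{smallmatrix}\right)$ has one positive and one negative eigenvalue yields $i_{+}(\Gamma)=i_{+}(\Gamma')+1$ and $i_{-}(\Gamma)=i_{-}(\Gamma')+1$. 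This is essentially the standard proof of this classical fact in the cited literature, and it is the same congruence technique the present paper deploys repeatedly in its own computations (e.g., the reductions of $A(\Gamma_4)$ and $A(H_3^{\sigma})$ in the proof of Theorem \ref{thm-2-3}), so nothing further is needed.
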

\begin{lem}\label{lem-2-1-1}
Let $\Gamma'$ be an induced signed subgraph of signed graph $\Gamma$. Then $i_+(\Gamma)\geq i_+(\Gamma')$ and $i_-(\Gamma)\geq i_-(\Gamma')$.
\end{lem}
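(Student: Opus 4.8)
The plan is to reduce the statement to a fact about principal submatrices of real symmetric matrices and then exploit the variational (subspace) description of the inertia indices. First I would observe that, since $\Gamma'$ is the subgraph induced by some vertex subset $S\subseteq V(\Gamma)$, after relabelling the vertices so that those of $S$ come first we may write
\[
A(\Gamma)=\begin{pmatrix} A(\Gamma') & B \\ B^{\top} & C \end{pmatrix},
\]
so that $A(\Gamma')$ is exactly the principal submatrix of $A(\Gamma)$ indexed by $S$; here I use the definition of the induced signed subgraph, namely that the sign of each edge inside $\Gamma[S]$ coincides with its sign in $\Gamma$. Thus it suffices to prove that for a real symmetric matrix $M$ and any principal submatrix $M'$ one has $i_+(M)\ge i_+(M')$ and $i_-(M)\ge i_-(M')$.

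Next I would recall the standard consequence of the spectral theorem (equivalently, of Lemma \ref{lem-2-0}): for a real symmetric matrix $M$, the index $i_-(M)$ equals the maximum dimension of a subspace $W$ on which the quadratic form $x^{\top}Mx$ is negative definite, i.e.\ $x^{\top}Mx<0$ for all nonzero $x\in W$; dually, $i_+(M)$ is the maximum dimension of a subspace on which the form is positive definite. With this characterization in hand, the inequalities follow by a zero-padding embedding. Taking $W'\subseteq\mathbb{R}^{|S|}$ to be a subspace of dimension $i_-(\Gamma')$ on which $y^{\top}A(\Gamma')y<0$, I would embed it into $\mathbb{R}^{|V(\Gamma)|}$ as $\widetilde{W}=\{(y,\mathbf{0}):y\in W'\}$; for every nonzero $(y,\mathbf{0})\in\widetilde{W}$ the block form gives $(y,\mathbf{0})^{\top}A(\Gamma)(y,\mathbf{0})=y^{\top}A(\Gamma')y<0$, so $\widetilde{W}$ is a negative-definite subspace for $A(\Gamma)$ of dimension $i_-(\Gamma')$, forcing $i_-(\Gamma)\ge i_-(\Gamma')$. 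The inequality $i_+(\Gamma)\ge i_+(\Gamma')$ is proved identically after replacing ``negative'' by ``positive''.

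I expect the only delicate point to be the variational characterization of the inertia indices; everything after it is a routine embedding. If one prefers to avoid quoting that characterization, an alternative route is to peel off the vertices of $V(\Gamma)\setminus S$ one at a time and apply the Cauchy interlacing theorem at each step: deleting a single vertex passes to a principal submatrix of order one less, whose eigenvalues interlace those of the larger matrix, and interlacing immediately yields that neither the count of positive eigenvalues nor the count of negative eigenvalues can increase under one deletion. Iterating over the $|V(\Gamma)|-|S|$ deleted vertices then yields both inequalities. Either way, the content is that passing to a principal submatrix cannot create new positive or negative eigenvalues.
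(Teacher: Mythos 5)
Your proof is correct. There is nothing in the paper to compare it against: the paper states this lemma bare, with no proof and no citation, treating it as a standard fact about real symmetric matrices. Your argument supplies exactly the standard justification — the observation that $A(\Gamma')$ is a principal submatrix of $A(\Gamma)$ (using that induced signed subgraphs inherit edge signs), followed by the variational characterization of $i_\pm$ as the maximal dimension of a definite subspace and the zero-padding embedding; the alternative route you sketch, peeling off one vertex at a time and applying Cauchy interlacing, is equally valid and is the form in which this fact is most often quoted in the spectral graph theory literature. One small imprecision worth fixing: the subspace characterization of the inertia indices is a consequence of the spectral theorem, but it is not simply ``equivalent to'' Sylvester's law of inertia (Lemma~\ref{lem-2-0}); Sylvester's law asserts invariance of inertia under congruence and does not by itself hand you the maximal-definite-subspace description. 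Since your argument only uses the spectral-theorem route (orthogonal decomposition into positive, null, and negative eigenspaces plus a dimension count), this parenthetical does not affect correctness.
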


\begin{lem}[\cite{GuiHai.Yu}, \cite{G.H.Yu}, \cite{G.H.Yu1}]\label{lem-2-1}
Let $C_n^\sigma$, $P_n^\sigma$ be a signed cycle, a signed path of order $n$, respectively.
\begin{enumerate}[(1)]
\item If $C_n^\sigma$ is balanced, then $i_-(C_n^\sigma)=\left\{\begin{array}{ll}\lceil\frac{n}{2}\rceil-1 &n\equiv 0,1(\bmod 4) \\ \lceil\frac{n}{2}\rceil & n\equiv 2,3(\bmod 4)\end{array}\right.$;
\item If $C_n^\sigma$ is unbalanced, then $i_-(C_n^\sigma)=\left\{\begin{array}{ll}\lceil\frac{n}{2}\rceil-1 &n\equiv 2,3(\bmod 4) \\ \lceil\frac{n}{2}\rceil & n\equiv 0,1(\bmod 4)\end{array}\right.$;
\item $i_-(P_n^\sigma)=\lfloor\frac{n}{2}\rfloor$.
\end{enumerate}
\end{lem}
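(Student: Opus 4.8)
The plan is to exploit two facts: (i) switching leaves the negative inertia index unchanged, and (ii) a signed path, and a signed cycle of each balance type, has a convenient canonical representative. For (i), observe that if $\Gamma_2=\Gamma_1^\theta$ then $A(\Gamma_2)=D^{\top}A(\Gamma_1)D$ with $D=\mathrm{diag}(\theta(v_1),\dots,\theta(v_n))$ a $\pm1$ diagonal matrix, so the two adjacency matrices are congruent and Lemma~\ref{lem-2-0} gives $i_-(\Gamma_1)=i_-(\Gamma_2)$. For (ii), a signed path is acyclic, hence balanced, and is switching equivalent to $(P_n,+)$; a signed cycle is balanced exactly when it is switching equivalent to $(C_n,+)$ and unbalanced exactly when it is switching equivalent to the cycle carrying a single negative edge, since a cycle has cyclomatic number one and its switching class is therefore determined by its sign alone. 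Thus it suffices to compute $i_-$ for these three canonical signed graphs.

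For part (3) I would argue by induction on $n$ using Lemma~\ref{lem-2-2}. The path $(P_n,+)$ has a pendant endpoint $u$; deleting $u$ together with its unique neighbour leaves the induced subpath $(P_{n-2},+)$, so Lemma~\ref{lem-2-2} yields $i_-(P_n^\sigma)=i_-(P_{n-2}^\sigma)+1$. With the base values $i_-(P_1^\sigma)=0$ (the isolated vertex) and $i_-(P_2^\sigma)=1$ (a single signed edge has eigenvalues $\pm1$ irrespective of its sign), the recurrence gives $i_-(P_n^\sigma)=\lfloor\frac{n}{2}\rfloor$ for every $n$ of either parity.

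For parts (1) and (2) I would count negative eigenvalues directly from the explicit spectra of the canonical cycles. The balanced cycle $(C_n,+)$ is a circulant and has eigenvalues $2\cos\frac{2\pi j}{n}$, $j=0,\dots,n-1$. For the unbalanced cycle, placing the single negative edge between $v_n$ and $v_1$, the eigenvalue equation is the three-term recurrence $x_{j-1}+x_{j+1}=\lambda x_j$ subject to the antiperiodic boundary condition $x_{j+n}=-x_j$, the sign flip being absorbed into the boundary; its eigenvalues are therefore $2\cos\frac{(2j+1)\pi}{n}$, $j=0,\dots,n-1$. A negative eigenvalue requires $j\in\bigl(\frac{n}{4},\frac{3n}{4}\bigr)$ in the balanced case and $2j+1\in\bigl(\frac{n}{2},\frac{3n}{2}\bigr)$ in the unbalanced case; tallying the admissible integers in each of the four residue classes $n\equiv 0,1,2,3\pmod 4$ then reproduces the stated piecewise formulas.

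The main obstacle is the cycle computation rather than the path one: I must justify the antiperiodic eigenvalue formula for the unbalanced cycle (equivalently, that its switching class is exactly the one-negative-edge cycle), and then carry out the four interval-counting arguments without off-by-one slips, paying particular attention to whether an endpoint of the interval is itself an integer. This is precisely where the residues split, as the endpoints $\frac{n}{4},\frac{3n}{4}$ (balanced) or $\frac{n}{2},\frac{3n}{2}$ (unbalanced) are integral for some classes and not others, which is what produces the $\lceil\frac{n}{2}\rceil-1$ versus $\lceil\frac{n}{2}\rceil$ dichotomy and the swap of residue classes between the balanced and unbalanced cases. The path case, by contrast, follows immediately from the pendant-deletion lemma.
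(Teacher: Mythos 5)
Your proof is correct, but there is nothing in the paper to compare it against: the paper does not prove this lemma, it imports it wholesale from the cited references \cite{GuiHai.Yu}, \cite{G.H.Yu}, \cite{G.H.Yu1}. So your argument is a genuine self-contained derivation of a statement the paper treats as a black box. Its ingredients are all sound, and in fact several of them duplicate machinery the paper already has, which you could cite instead of re-deriving: your observation that $A(\Gamma^\theta)=DA(\Gamma)D$ with $D$ a $\pm1$ diagonal matrix is exactly why switching preserves inertia (Lemma \ref{lem-2-0}); the reduction of a balanced cycle to $(C_n,+)$ is Theorem \ref{thm-1-0}; and the one-negative-edge normal form for the unbalanced cycle is precisely Lemma \ref{lem-2-11}, so your cyclomatic-number argument, while correct, is not needed. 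The path recurrence $i_-(P_n^\sigma)=i_-(P_{n-2}^\sigma)+1$ via Lemma \ref{lem-2-2} with base cases $i_-(P_1^\sigma)=0$, $i_-(P_2^\sigma)=1$ is clean and immediate. For the cycles, your spectra are right: the antiperiodic ansatz $x_j=\omega^j$ with $\omega^n=-1$ does satisfy the eigenvalue equation at the signed edge, giving eigenvalues $2\cos\frac{(2j+1)\pi}{n}$. The tallies you deferred do come out as claimed: the number of integers in $\bigl(\frac{n}{4},\frac{3n}{4}\bigr)$ is $2k-1$, $2k$, $2k+1$, $2k+2$ for $n=4k,4k+1,4k+2,4k+3$ respectively, matching $\lceil\frac{n}{2}\rceil-1$, $\lceil\frac{n}{2}\rceil-1$, $\lceil\frac{n}{2}\rceil$, $\lceil\frac{n}{2}\rceil$; and the number of odd integers in $\bigl(\frac{n}{2},\frac{3n}{2}\bigr)$ is $2k$, $2k+1$, $2k$, $2k+1$ for the same residues, matching $\lceil\frac{n}{2}\rceil$, $\lceil\frac{n}{2}\rceil$, $\lceil\frac{n}{2}\rceil-1$, $\lceil\frac{n}{2}\rceil-1$. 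What your approach buys is independence from the weighted-unicyclic-graph theory of the cited papers, at the cost of an explicit trigonometric computation; an alternative that stays entirely inside the paper's toolkit would be to delete one vertex of the cycle, apply interlacing-type bounds via Lemma \ref{lem-2-1-1} and part (3) for the resulting path, and pin down the last eigenvalue by the determinant (Lemma \ref{lem-2-0-0}), but your version is no less rigorous.
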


For $x\not\in Y$, a family of $k$ internally disjoint $(x, Y)$-paths whose terminal vertices are distinct is referred to as a $k$-\emph{fan} from $x$ to $Y$. Furthermore, if all the paths of a $k$-fan have length $l$, then we call it a $k$-fan of length $l$.

\begin{figure}[htbp]
    \centering
    \includegraphics[width=1\linewidth]{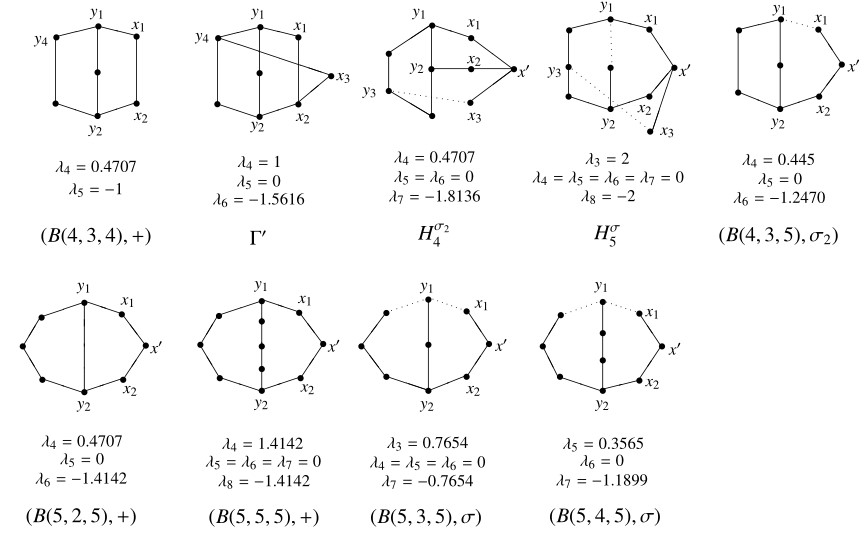}
    \caption{The signed graphs $(B(4,3,4),+)$, $\Gamma'$, $H_4^{\sigma_2}$, $H_5^{\sigma}$, $(B(4,3,5),\sigma_2)$, $(B(5,2,5),+)$, $(B(5,3,5),\sigma)$, $(B(5,4,5),\sigma)$ and $(B(5,5,5),+)$}
    \label{fig-0}
    \end{figure}

\begin{lem}\label{lem-2-1-5}
Let $\Gamma=(G, \sigma)$ be a signed graph with girth $g$ and $C_g^\sigma$ be a shortest signed cycle of $\Gamma$. If there exists a $k$-fan of length $l$ from $x$ to $V(C_g^\sigma)$,  then $\lfloor\frac{g}{k}\rfloor+2l\geq g$.
\end{lem}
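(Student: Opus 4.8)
The plan is to combine a pigeonhole argument on the shortest cycle with the minimality of the girth. First I would record the structural facts supplied by the fan: it consists of $k$ internally disjoint paths $P_1,\ldots,P_k$, each of length $l$, all emanating from $x$ and terminating at $k$ \emph{distinct} vertices $y_1,\ldots,y_k$ of $C_g^\sigma$. Since these are $(x,V(C_g^\sigma))$-paths, only the terminal vertex of each $P_i$ lies on the cycle, so every internal vertex of every $P_i$ avoids $V(C_g^\sigma)$; and internal disjointness means the paths pairwise meet only at $x$.

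Next I would apply pigeonhole to the cycle. The $k$ distinct endpoints $y_1,\ldots,y_k$ partition $C_g^\sigma$ into $k$ arcs whose lengths are positive integers summing to $g$. Hence the shortest such arc has length $s$ with $s\leq\lfloor\frac{g}{k}\rfloor$ (if every arc had length $\geq\lfloor\frac{g}{k}\rfloor+1$, the total would exceed $g$). Let $y,y'$ be the two endpoints bounding this shortest arc, and let $P,P'$ be the fan paths terminating at $y,y'$ respectively; note $s\geq 1$ since $y\neq y'$.

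Then I would construct a short cycle. Because $P$ and $P'$ share only the vertex $x$ and each has length $l$, their union is a path $Q$ of length $2l$ from $y$ to $y'$; and since all internal vertices of $P,P'$ avoid the cycle, $Q$ meets $C_g^\sigma$ only at $y$ and $y'$. Gluing $Q$ to the shortest arc (of length $s\geq 1$) therefore yields a genuine cycle, of length $2l+s$. By the definition of the girth $g$ this length is at least $g$, so $2l+s\geq g$; combining with $s\leq\lfloor\frac{g}{k}\rfloor$ gives
\[
\lfloor\tfrac{g}{k}\rfloor+2l \;\geq\; s+2l \;\geq\; g,
\]
which is the desired inequality.

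The one point that demands care — the main obstacle — is confirming that $Q$ together with the shortest arc is an actual \emph{cycle} rather than merely a closed walk, i.e. that it has no repeated vertices. This is precisely where internal disjointness of the fan and the avoidance of $V(C_g^\sigma)$ by internal fan-vertices are used, and it is exactly what licenses the appeal to the girth bound $2l+s\geq g$. Everything else is elementary counting.
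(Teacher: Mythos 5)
Your proof is correct and follows essentially the same route as the paper's: a pigeonhole argument showing two fan endpoints lie at distance at most $\lfloor\frac{g}{k}\rfloor$ on $C_g^\sigma$, followed by assembling the two corresponding fan paths and the short arc into a cycle of length at most $\lfloor\frac{g}{k}\rfloor+2l$, which the girth bound forces to be at least $g$. The only difference is that you spell out the verification that the resulting closed walk is a genuine cycle (using internal disjointness and the fact that internal fan vertices avoid $V(C_g^\sigma)$), a point the paper's two-line proof leaves implicit.
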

\begin{proof}
Because this $k$-fan has $k$ different terminal vertices in $V(C_g^\sigma)$, there must be two of these terminal vertices whose distance on $V(C_g^\sigma)$ is less than or equal to $\lfloor\frac{g}{k}\rfloor$. Hence we can find a signed cycle of length at most $\lfloor\frac{g}{k}\rfloor+2l$ in $\Gamma$, and hence, the desired inequality holds.
\end{proof}

\begin{thm}[\cite{Yaoping.Hou}]\label{thm-1-0}
Let $\Gamma=(G, \sigma)$ be a signed graph. Then $\Gamma$ is balanced if and only if $\Gamma \backsim (G,+)$.
\end{thm}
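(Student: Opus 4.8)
The plan is to prove the two implications separately, with the single underlying observation that a switching function never changes the sign of any cycle. This fact does the work in both directions, so I would isolate it at the outset: for any cycle $C=v_1v_2\cdots v_mv_1$ and any switching function $\theta$, the sign of $C$ in $\Gamma^\theta$ is
\[
\prod_{i}\sigma^\theta(v_iv_{i+1})=\prod_{i}\theta(v_i)\,\sigma(v_iv_{i+1})\,\theta(v_{i+1}).
\]
Since each vertex occurs as an endpoint of exactly two consecutive edges of $C$, every factor $\theta(v_i)$ appears squared and cancels, giving $sgn_{\Gamma^\theta}(C)=sgn_\Gamma(C)$.

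For the direction $(\Leftarrow)$, suppose $\Gamma\sim(G,+)$, say $\Gamma^\theta=(G,+)$. Every cycle of $(G,+)$ is a product of $+$'s and hence positive, so $(G,+)$ is balanced. By the sign-preservation above, the cycle signs of $\Gamma$ and of $\Gamma^\theta=(G,+)$ coincide, whence every cycle of $\Gamma$ is positive and $\Gamma$ is balanced.

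For the harder direction $(\Rightarrow)$, I would construct the switching function explicitly. Assume first that $G$ is connected (otherwise one treats each component separately and combines the resulting functions). Fix a spanning tree $T$ with a root $r$, and for each vertex $v$ set $\theta(v)$ equal to the product of the edge signs along the unique $T$-path from $r$ to $v$, with $\theta(r)=+$. A short telescoping argument along any tree edge $uv$ (where $v$ is the child of $u$) gives $\theta(v)=\theta(u)\sigma(uv)$, so $\sigma^\theta(uv)=\theta(u)\sigma(uv)\theta(v)=\sigma(uv)^2=+$; thus every tree edge is made positive by the switch.

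The crux is to show that each non-tree edge is positive in $\Gamma^\theta$ as well. For a non-tree edge $e=uv$, consider its fundamental cycle $C_e$, formed by $e$ together with the $T$-path joining $u$ and $v$. Balance of $\Gamma$ gives $sgn_\Gamma(C_e)=+$, and by the sign-preservation lemma $sgn_{\Gamma^\theta}(C_e)=+$ as well. But in $\Gamma^\theta$ all tree edges are positive, so the sign of $C_e$ reduces to $\sigma^\theta(uv)$ alone; hence $\sigma^\theta(uv)=+$. Consequently $\Gamma^\theta=(G,+)$, which is precisely $\Gamma\sim(G,+)$. I expect the only real obstacle to be organizing the fundamental-cycle bookkeeping cleanly and verifying that the component-wise construction patches together correctly when $G$ is disconnected.
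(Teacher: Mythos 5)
Your proof is correct, but there is nothing in the paper to compare it against: Theorem \ref{thm-1-0} is stated with a citation to \cite{Yaoping.Hou} and the paper supplies no proof of its own, treating it as a known result (it is the classical balance theorem going back to Harary, often attributed in the signed-graph literature to Zaslavsky). Your argument is the standard proof of that classical theorem, and both halves are sound: the key observation that switching preserves cycle signs (each $\theta(v_i)$ appears exactly twice around a cycle and cancels) settles the direction $(\Leftarrow)$; for $(\Rightarrow)$, defining $\theta(v)$ as the sign of the tree path from the root makes every tree edge positive via the telescoping identity $\theta(v)=\theta(u)\sigma(uv)$, and then for any chord $uv$ the sign of its fundamental cycle in $\Gamma^\theta$ reduces to $\sigma^\theta(uv)$ alone, which balance forces to be $+$. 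Your handling of the disconnected case is also fine, since every cycle lies within a single component, and your conclusion $\Gamma^\theta=(G,+)$ matches the paper's definition of $\Gamma\sim(G,+)$ exactly. In short: a complete, self-contained proof of a statement the paper only quotes.
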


\begin{lem}[\cite{FanY.Z}] \label{lem-2-11}
Let $\Gamma$ be an unbalanced signed unicyclic graph of order $n$. Then $\Gamma$ is switching equivalent to a signed unicyclic graph of order $n$ with exactly one (arbitrary) negative edge on the cycle.
\end{lem}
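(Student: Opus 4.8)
The plan is to exploit two standard properties of switching: the sign of any cycle is a switching invariant, and any signed tree can be switched to be all-positive. Combining these with the unicyclic structure pins the single negative edge onto the cycle and lets us move it to any prescribed position.

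First I would record that switching preserves cycle signs. For a cycle $C$ and a switching function $\theta$, one computes $\prod_{xy\in E(C)}\sigma^\theta(xy)=\prod_{xy\in E(C)}\theta(x)\sigma(xy)\theta(y)$; since each vertex of $C$ is incident to exactly two edges of $C$, every factor $\theta(v)$ occurs exactly twice and cancels, leaving $sgn(C^\sigma)$. Hence the sign of each cycle is invariant under switching, which is the mechanism underlying Theorem~\ref{thm-1-0}. In particular, since $\Gamma$ is unbalanced and unicyclic, its unique cycle $C_g^\sigma$ satisfies $sgn(C_g^\sigma)=-$, and this will remain true after any switching.

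Next I would fix an arbitrary edge $e^*=xy$ on $C_g^\sigma$ and consider the spanning tree $T=\Gamma\setminus e^*$ (a spanning tree exists and is obtained by deleting exactly one cycle edge, because $\Gamma$ is unicyclic). I would positivize $T$ by the usual rooted construction: choose a root $r$ and, for each vertex $v$, set $\theta(v)$ equal to the product of the signs of the edges along the unique $r$--$v$ path in $T$. A short induction on the distance from $r$ shows $\sigma^\theta(e)=+$ for every $e\in E(T)$: for a parent--child edge, the two endpoint labels differ precisely by that edge's sign, so the two copies of the sign square to $+$. After switching by $\theta$, every edge of $T$ is positive, and the only edge whose sign can remain negative is the lone non-tree edge $e^*$.

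Finally, because switching does not alter $sgn(C_g^\sigma)=-$ and, in $\Gamma^\theta$, the cycle consists of $e^*$ together with edges of $T$ that are now all positive, we get $-=sgn(C_g^\sigma)=\sigma^\theta(e^*)\cdot(+)$, forcing $\sigma^\theta(e^*)=-$. Thus $\Gamma^\theta$ has exactly one negative edge, namely $e^*$, and it lies on the cycle; since $e^*$ was an arbitrary cycle edge, the ``arbitrary'' clause follows. The only delicate point is verifying that the rooted construction simultaneously positivizes \emph{all} tree edges (the induction above) and that $e^*$ is truly the unique surviving non-tree edge; both rely on $\Gamma$ being unicyclic, so its spanning tree omits exactly one edge.
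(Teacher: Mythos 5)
Your proof is correct. Note that the paper itself offers no proof of this lemma --- it is quoted as a known result from \cite{FanY.Z} --- so there is nothing internal to compare against; your argument (cycle signs are switching-invariant since each $\theta(v)$ appears twice in the product around the cycle, plus the rooted spanning-tree construction that positivizes $T=\Gamma\setminus e^*$ and thereby forces the surviving negative sign onto the arbitrarily chosen cycle edge $e^*$) is exactly the standard switching argument, and it correctly delivers both the ``exactly one negative edge'' and the ``arbitrary position'' clauses.
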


Let $C^\sigma$ be a signed cycle of $\Gamma$ and $v\in V(\Gamma)\setminus V(C^\sigma)$. Denoted by $d(v, x)$ the length of a shortest path between $v$ and $x$ and let $d(v, C^\sigma)=min\{d(v,x)\mid x\in V(C^\sigma)\}$. Defined
$$N_r(v,C^\sigma) =\left\{v\in V\left(\Gamma\right)\setminus V( C^\sigma)\mid d(v,C^\sigma)=r\right\},$$
where $r$ is a positive integer, and $|N_r(v, C^\sigma)|$ denote the number of vertices in $N_r(v, C^\sigma)$. If a signed unicyclic graph $\Gamma$ satisfies (a) $N_r(v, C^\sigma)=\emptyset$ for $r\geq 2$; (b) $N_1(v, C^\sigma)$ is an independent set, and each vertex of $N_1(v, C^\sigma)$ is adjacent to exactly one vertex of $V(C^\sigma)$, then $\Gamma$ is called a \emph{canonical signed unicyclic graph}. Set $H^\sigma$ is an induced signed subgraph of the canonical signed unicyclic graph $\Gamma$. We call $H^\sigma$ a \emph{attached pendant star} of $\Gamma$ if $H^\sigma$ is a star and its center is the only vertex which have exactly two neighbors in $C^\sigma$, this center vertex of $H^\sigma$ is called the \emph{major vertex}.

\begin{thm}[\cite{L.D}]\label{thm-1-1}
Let $\Gamma$ be a canonical signed unicyclic graph with girth $g$ and the unique signed cycle $C_g^\sigma$. Then, the following statements hold:
\begin{enumerate}[(1)]
\setlength{\itemsep}{0pt}
\item If $\Gamma$ is a signed cycle, then $i_-(\Gamma)=\lceil \frac{g}{2}\rceil$ if and only if $\Gamma\cong C_g^\sigma$, where $C_g^\sigma$ is balanced and $g\equiv 2,3(\bmod\ 4)$, or unbalanced and $g\equiv 0,1(\bmod\ 4)$;
\item If $\Gamma$ is not a signed cycle and $g\equiv 1,3(\bmod\ 4)$, then $i_-(\Gamma)=\lceil \frac{g}{2}\rceil$ if and only if $\Gamma$ has one or more attached pendant stars such that exactly one path between any two major vertices of $V(C_g^\sigma)$ has even order;
\item If $\Gamma$ is not a signed cycle and $g\equiv 0,2(\bmod\ 4)$, then $i_-(\Gamma)=\lceil \frac{g}{2}\rceil$ if and only if $\Gamma$ has one or more attached pendant stars such that all paths between any two major vertices of $V(C_g^\sigma)$ have odd order.
\end{enumerate}
\end{thm}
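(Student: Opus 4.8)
My plan is to settle the cycle case directly from Lemma~\ref{lem-2-1} and to reduce the other two cases to one counting identity. For part (1), $\Gamma=C_g^\sigma$ gives $i_-(\Gamma)=i_-(C_g^\sigma)$, which Lemma~\ref{lem-2-1}(1)--(2) computes explicitly in terms of balance and $g\bmod 4$; comparing the four outputs with $\lceil g/2\rceil$ shows that equality holds exactly when $C_g^\sigma$ is balanced with $g\equiv 2,3\pmod 4$ or unbalanced with $g\equiv 0,1\pmod 4$, as claimed.

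For parts (2) and (3) I would first fix notation. Since $\Gamma$ is canonical but not a cycle, every vertex off $C_g^\sigma$ is a pendant attached to a single cycle vertex and $N_1$ is independent; hence the cycle vertices carrying pendants are exactly the major vertices $v_1,\dots,v_m$ with $m\ge 1$, and they cut $C_g^\sigma$ into $m$ consecutive arcs, the arc from $v_i$ to $v_{i+1}$ containing $p_i\ge 0$ internal (non-major) vertices, so that $\sum_{i=1}^m p_i=g-m$. The core step is an iterated pendant reduction: each $v_i$ retains one of its own pendants throughout the process, because the canonical hypotheses force the deletions at distinct major vertices to be vertex-disjoint, so at the $i$-th step I may delete a pendant of $v_i$ together with its unique neighbour $v_i$, each such deletion contributing $+1$ by Lemma~\ref{lem-2-2}. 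After all $m$ deletions the surviving cycle vertices of each arc form a path $P_{p_i}$, these paths are pairwise non-adjacent and carry no remaining pendant (pendants attach only to major vertices), and the surplus pendants have become isolated. Hence, by Lemma~\ref{lem-2-1}(3),
\begin{equation*}
i_-(\Gamma)=m+\sum_{i=1}^{m}\left\lfloor\frac{p_i}{2}\right\rfloor .
\end{equation*}

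Let $e$ denote the number of arcs with $p_i$ even (equivalently, with even order $p_i+2$). Using $\lfloor p_i/2\rfloor=p_i/2$ when $p_i$ is even and $(p_i-1)/2$ when $p_i$ is odd, together with $\sum p_i=g-m$, the display collapses to
\begin{equation*}
i_-(\Gamma)=\frac{g+e}{2},
\end{equation*}
a value that depends on no sign whatsoever, which is precisely why balance plays no role in (2)--(3). Now I read off the two cases: if $g$ is even ($g\equiv 0,2\pmod 4$) then $\lceil g/2\rceil=g/2$ and $i_-(\Gamma)=\lceil g/2\rceil$ holds iff $e=0$, i.e. every path between consecutive major vertices has odd order (part (3)); if $g$ is odd ($g\equiv 1,3\pmod 4$) then $\lceil g/2\rceil=(g+1)/2$ and the equality holds iff $e=1$, i.e. exactly one such path has even order (part (2)). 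Both directions are immediate from $i_-(\Gamma)=\tfrac{g+e}{2}$ once one notes that $g$ and $e$ always share the same parity.

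The step that needs the most care is the reduction behind the first displayed formula: one must verify, from the canonical hypotheses, that a pendant survives at each major vertex through the successive deletions and that after removing all major vertices the residual graph really is a disjoint union of the arc-paths $P_{p_i}$ plus isolated vertices, with no surviving adjacency linking two arcs or an arc to a leftover pendant. Once this is secured the identity $i_-(\Gamma)=\tfrac{g+e}{2}$ is exact, and the passage to the stated parity conditions is only bookkeeping with $\lceil\cdot\rceil$ together with the translation of the count $e$ into the even/odd-order language of the statement.
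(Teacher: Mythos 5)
Your proof is correct and, in substance, it is the same argument this paper uses: the paper cites Theorem~\ref{thm-1-1} from \cite{L.D} without reproving it, but its proof of the analogous Theorem~\ref{thm-2-1} proceeds by exactly your decomposition --- delete each attached pendant star (one pendant plus its major vertex) via Lemma~\ref{lem-2-2}, gaining $+1$ per major vertex, leaving the arc-paths whose negative inertia is $\lfloor p_i/2\rfloor$ by Lemma~\ref{lem-2-1}(3), and then do the parity bookkeeping, while part (1) is read directly off Lemma~\ref{lem-2-1}(1)--(2). Your closed form $i_-(\Gamma)=\tfrac{g+e}{2}$ is just a compact repackaging of that same count (the paper writes it as $k-c=\sum_j\bigl[l_j-2i_-(P_{l_j}^\sigma)\bigr]$), so the two treatments coincide.
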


\begin{thm}[\cite{L.D}]\label{thm-1-2}
Let $\Gamma=(G,\sigma)$ be a connected signed graph with girth $g\geq 4$, and there exists no signed cycle $C_g^\sigma$ of $\Gamma$ satisfying $i_-(C_g^\sigma)=\lceil\frac{g}{2}\rceil$. Suppose that $\Gamma$ is not a canonical signed unicyclic graph. Then $i_-(\Gamma)=\lceil\frac{g}{2}\rceil$ if and only if $\Gamma$ is isomorphic to one of the following signed graphs:
\setlength{\itemsep}{0pt}
\item (1) The signed graphs with girth 4 obtained from $P_4^\sigma$, $P_5^\sigma$, balanced $C_5^\sigma$ or unbalanced $C_6^\sigma$ by adding twin vertices;
\item (2) The signed graphs obtained by joining a vertex of signed cycle $C_g^\sigma$ to the center of a star $K_{1,r}$, where $g\equiv 0,1(\bmod\ 4)$ if $C_g^\sigma$ is balanced and $g\equiv 2,3(\bmod\ 4)$ if $C_g^\sigma$ is unbalanced;
\item (3) $(B(4,3,4),+)$, $\Gamma'$, $H_4^{\sigma_2}$, $H_5^{\sigma}$, $(B(4,3,5),\sigma_2)$, $(B(5,2,5),+)$, $(B(5,3,5),\sigma)$, $(B(5,4,5),\sigma)$ and $(B(5,5,5),+)$ (see Fig. \ref{fig-0}).
\end{thm}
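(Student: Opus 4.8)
The plan is to prove both directions of the stated equivalence, treating the hypothesis as a normalization of the shortest cycle. First I would reformulate the standing assumption by means of Lemma~\ref{lem-2-1}: the requirement that no shortest cycle $C_g^\sigma$ satisfies $i_-(C_g^\sigma)=\lceil\frac{g}{2}\rceil$ is equivalent to saying that every shortest cycle is balanced with $g\equiv 0,1\,(\bmod\ 4)$ or unbalanced with $g\equiv 2,3\,(\bmod\ 4)$, so that $i_-(C_g^\sigma)=\lceil\frac{g}{2}\rceil-1$. Since $C_g^\sigma$ is an induced subgraph of $\Gamma$, Lemma~\ref{lem-2-1-1} gives $i_-(\Gamma)\geq\lceil\frac{g}{2}\rceil-1$. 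Hence the equation $i_-(\Gamma)=\lceil\frac{g}{2}\rceil$ expresses precisely that the part of $\Gamma$ lying outside the cycle contributes exactly one extra negative eigenvalue, and the whole argument is an analysis of how this single unit of excess can arise.

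For the sufficiency direction I would verify each listed graph directly, the workhorse being the pendant-deletion identity of Lemma~\ref{lem-2-2}. For family (2), deleting one leaf of the star $K_{1,r}$ together with its centre removes the centre's link to the cycle and isolates the remaining $r-1$ leaves, leaving $C_g^\sigma\cup(r-1)K_1$; thus $i_-(\Gamma)=i_-(C_g^\sigma)+1=\lceil\frac{g}{2}\rceil$, and the parity conditions listed are exactly those that force $i_-(C_g^\sigma)=\lceil\frac{g}{2}\rceil-1$. For family (1) (twins added to $P_4^\sigma$, $P_5^\sigma$, balanced $C_5^\sigma$ or unbalanced $C_6^\sigma$) and for the sporadic graphs of (3), I would compute $i_-$ one graph at a time, again peeling off pendants by Lemma~\ref{lem-2-2} down to a path or cycle whose inertia is read off from Lemma~\ref{lem-2-1}, treating the twin vertices through the congruence they induce together with Sylvester's law (Lemma~\ref{lem-2-0}), and normalizing signatures beforehand via Theorem~\ref{thm-1-0} and Lemma~\ref{lem-2-11} so that the relevant cycle is all-positive when balanced and carries a single prescribed negative edge when unbalanced.

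For necessity I would fix a shortest cycle $C_g^\sigma$ and argue that the budget of exactly one excess negative eigenvalue forces a rigid attachment structure, splitting the analysis on the girth. When $g\geq 5$, Lemma~\ref{lem-2-5} guarantees that each vertex off the cycle meets $C_g^\sigma$ in at most one vertex and that every induced path joining two cycle vertices and disjoint from $C_g^\sigma$ has length at least $\lceil\frac{g}{2}\rceil$, while Lemma~\ref{lem-2-1-5} bounds the admissible fans; together these exclude short secondary cycles and chords, so outside $C_g^\sigma$ the graph is essentially a forest hung on the cycle. Repeated application of Lemma~\ref{lem-2-2} then shows that each pendant deletion lowers $i_-$ by exactly one, so with only one unit available at most one deletion may avoid the cycle and all deeper or branched attachments are forbidden; since $\Gamma$ is assumed not canonical, the surviving unicyclic possibility is precisely the star-attachment of (2), while any genuinely non-unicyclic $\Gamma$ would either create a shorter cycle (contradicting the girth) or a second independent cycle that pushes $i_-$ above $\lceil\frac{g}{2}\rceil$ through Lemma~\ref{lem-2-1-1}, both of which I would eliminate by the same fan-and-monotonicity bookkeeping.

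The hard part will be the girth-$4$ regime, where Lemma~\ref{lem-2-5} no longer forbids a vertex from meeting the cycle in two vertices, so twin vertices and $4$-cycle chords become admissible and the rigidity used for $g\geq 5$ collapses. Here I expect to enumerate the ways a single excess negative eigenvalue can appear: successively contracting the graph by Lemma~\ref{lem-2-2} must terminate in one of the small seeds $P_4^\sigma$, $P_5^\sigma$, balanced $C_5^\sigma$ or unbalanced $C_6^\sigma$, with twin additions as the only inertia-preserving enlargements, which yields family (1); the finitely many configurations that resist this reduction must then be examined by hand and matched against the sporadic list (3) of Figure~\ref{fig-0}. Verifying that this list is genuinely \emph{exhaustive} — that no further $g=4$ graph and no additional sporadic exception slips through — is the delicate step, and it is there that Sylvester's law (Lemma~\ref{lem-2-0}) combined with explicit determinant evaluations (Lemma~\ref{lem-2-0-0}) carries the decisive weight.
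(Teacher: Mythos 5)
This statement is not proved in the paper at all: Theorem~\ref{thm-1-2} is imported verbatim from \cite{L.D} as a known ingredient (the paper only uses it, e.g.\ via Remark~\ref{re-1} and in Case 3 of Theorem~\ref{thm-2-2}), so there is no in-paper proof to compare yours against. Judging your proposal on its own merits, it is a plausible outline for the sufficiency half but has a genuine gap in the necessity half.

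The decisive flaw is your treatment of $g\geq 5$. You claim that outside a shortest cycle the graph is ``essentially a forest hung on the cycle'' and that any genuinely non-unicyclic $\Gamma$ ``pushes $i_-$ above $\lceil\frac{g}{2}\rceil$ through Lemma~\ref{lem-2-1-1}'', so that all sporadic cases are confined to the girth-$4$ regime. This is contradicted by the very list you are trying to recover: item (3) consists largely of bicyclic theta-type signed graphs of girth $5$, namely $(B(4,3,5),\sigma_2)$, $(B(5,2,5),+)$, $(B(5,3,5),\sigma)$, $(B(5,4,5),\sigma)$, $(B(5,5,5),+)$ and $H_5^\sigma$, and Remark~\ref{re-1} records that the girths occurring in the list are less than $6$, not equal to $4$. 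Lemma~\ref{lem-2-5} only gives the \emph{lower} bound $t\geq\lceil\frac{g}{2}\rceil$ on an induced path joining two cycle vertices; it does not forbid such paths, so theta subgraphs whose two new cycles both have length at least $g$ are fully compatible with girth $g$. Moreover Lemma~\ref{lem-2-1-1} is a monotonicity statement: it yields a contradiction only when you can exhibit an induced subgraph whose negative inertia index exceeds the budget, and for these theta graphs no such subgraph exists --- whether $i_-$ equals $\lceil\frac{g}{2}\rceil$ or $\lceil\frac{g}{2}\rceil+1$ depends delicately on the signature and the three path lengths, and must be settled by direct computation (this is exactly what the present paper does for the analogous girth $6$, $7$, $10$ theta graphs in Theorem~\ref{thm-2-3}, via explicit congruence reductions of the adjacency matrices). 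Your plan would erase precisely the cases that make item (3) nonempty at girth $5$.

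A second, structural gap: the bookkeeping ``each pendant deletion lowers $i_-$ by exactly one, so with only one unit available at most one deletion may avoid the cycle'' is not an argument. Lemma~\ref{lem-2-2} deletes a pendant vertex \emph{together with its neighbour}, so successive deletions reshape the graph (possibly severing branches or touching the cycle), and turning the ``one unit of excess'' heuristic into a proof requires the stratification by the sets $N_r(v,C_g^\sigma)$ combined with the fan bound of Lemma~\ref{lem-2-1-5}, carried out case by case --- the pattern visible in Theorems~\ref{thm-2-2} and \ref{thm-2-3} of this paper. Likewise, in the girth-$4$ regime you propose to ``examine by hand'' the ``finitely many configurations that resist reduction'', but the finiteness of that residue, and the completeness of the twin-extension description in item (1), are the content of the theorem rather than something you may presuppose. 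The sufficiency direction of your proposal is sound in outline (pendant deletion disposes of family (2) cleanly, and (1) and (3) are finite verifications modulo switching, via Theorem~\ref{thm-1-0} and Lemma~\ref{lem-2-11}), but as written the necessity direction would not close.
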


\begin{remark}\label{re-1}
The girths of the graphs shown in Theorem \ref{thm-1-2} are all less than 6.
\end{remark}
\section{Connected signed graphs with given negative inertia index and given girth}
In this section, we establish three main theorems.

\begin{thm}\label{thm-2-1}
Let $\Gamma$ be a canonical signed unicyclic graph with girth $g$ and the unique signed cycle of $\Gamma$ is $C_g^\sigma$. Then, the following statements hold:
\begin{enumerate}[(1)]
\setlength{\itemsep}{0pt}
\item If $g\equiv 1,3(\bmod\ 4)$, then $i_-(\Gamma)=\lceil \frac{g}{2}\rceil+1$ if and only if $\Gamma$ has at least three attached pendant stars, and there are exactly three paths of even order between any two major vertices in $V(C_g^\sigma)$;

\item If $g\equiv 0,2(\bmod\ 4)$, then $i_-(\Gamma)=\lceil \frac{g}{2}\rceil+1$ if and only if $\Gamma$ has at least two attached pendant stars, and there are exactly two paths of even order between any two major vertices in $V(C_g^\sigma)$.
\end{enumerate}
\end{thm}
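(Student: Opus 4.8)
The plan is to peel off all the pendants with Lemma \ref{lem-2-2}, thereby reducing $i_-(\Gamma)$ to a sum of path contributions that collapses to a single closed formula, from which both cases drop out.

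\emph{Reduction.} Let $w_1,\dots,w_k$ denote the centers of the $k$ attached pendant stars; by the canonical hypothesis these are precisely the vertices of $C_g^\sigma$ carrying pendants, and every off-cycle vertex is a pendant adjacent to exactly one $w_j$. For each $j$ I would fix one pendant $p_j$ at $w_j$ and apply Lemma \ref{lem-2-2} to $p_j$, deleting the pair $\{p_j,w_j\}$. Because distinct major vertices share no pendant, each $p_j$ is still a pendant at its step regardless of the order chosen, so $k$ successive applications give $i_-(\Gamma)=k+i_-(\Gamma_k)$, where $\Gamma_k=\Gamma-\{w_1,\dots,w_k,p_1,\dots,p_k\}$.

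\emph{Evaluating $\Gamma_k$.} Removing $w_1,\dots,w_k$ from $C_g^\sigma$ cuts the cycle into $k$ arcs; let $a_1,\dots,a_k$ be the numbers of (non-major) internal vertices, so $\sum_{i=1}^k a_i=g-k$ and each arc is an induced signed path $P_{a_i}^\sigma$. The unchosen pendants lose their only neighbour and become isolated, contributing $0$. Hence Lemma \ref{lem-2-1}(3) gives $i_-(\Gamma_k)=\sum_{i=1}^k\lfloor a_i/2\rfloor$; note the balance of $C_g^\sigma$ is irrelevant here, since the cycle has been destroyed. Writing $o$ and $e$ for the numbers of odd and even arcs (so $o+e=k$ and $o\equiv g-k\pmod 2$), the identities $\lfloor a_i/2\rfloor=a_i/2$ for even $a_i$ and $\lfloor a_i/2\rfloor=(a_i-1)/2$ for odd $a_i$ yield $\sum_i\lfloor a_i/2\rfloor=\tfrac12(g-k-o)$, whence $i_-(\Gamma)=k+\tfrac12(g-k-o)=\tfrac{g+e}{2}$; the congruence $e\equiv g\pmod 2$ confirms this is an integer. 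Here $e$ is exactly the number of even-order paths between consecutive major vertices, since an arc with $a_i$ internal vertices is a path of order $a_i+2$, whose parity equals that of $a_i$.

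\emph{Solving for the target.} Substituting into $i_-(\Gamma)=\lceil g/2\rceil+1$: if $g\equiv1,3\pmod4$ then $\lceil g/2\rceil=(g+1)/2$ and the equation reduces to $e=3$; if $g\equiv0,2\pmod4$ then $\lceil g/2\rceil=g/2$ and it reduces to $e=2$. Since there are exactly $k$ arcs we always have $e\le k$, so $e=3$ forces $k\ge3$ and $e=2$ forces $k\ge2$; thus requiring exactly three (resp. two) even-order paths already entails at least three (resp. two) attached pendant stars, and reading the formula in both directions yields the stated equivalences. A pure cycle, with $k=0$, satisfies $i_-\le\lceil g/2\rceil$ by Lemma \ref{lem-2-1} and is therefore correctly excluded. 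I expect the only delicate parts to be justifying that the iterated deletion is legitimate—each $p_j$ remaining pendant and $\Gamma_k$ being exactly the disjoint union of the arc-paths together with isolated vertices—and the parity bookkeeping that turns $k+\sum\lfloor a_i/2\rfloor$ into $(g+e)/2$; both are routine once set up carefully.
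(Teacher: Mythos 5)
Your proposal is correct and follows essentially the same route as the paper's proof: both peel off the $k$ attached pendant stars via Lemma \ref{lem-2-2} to get $i_-(\Gamma)=k+\sum_j i_-(P_{l_j}^\sigma)$, invoke $i_-(P_n^\sigma)=\lfloor n/2\rfloor$ from Lemma \ref{lem-2-1}(3), and finish with the same parity count (your closed form $i_-(\Gamma)=\tfrac{g+e}{2}$ is just a repackaging of the paper's identity $k-3=\sum_j\bigl[l_j-2i_-(P_{l_j}^\sigma)\bigr]$, resp. $k-2$ in the even case). Your write-up is somewhat more careful than the paper's on the minor points (legitimacy of the iterated deletion, isolated leftover pendants, and the exclusion of the pure cycle), but there is no substantive difference in method.
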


\begin{proof}
Clearly, $\Gamma\not\cong C_g^\sigma$ by Lemma \ref{lem-2-1}. Hence there are $k\geq 1$ attached pendant stars in $\Gamma$. Let $P_{l_1}^\sigma, \ldots, P_{l_k}^\sigma$ be the paths obtained upon removing the $k$ attached pendant stars from $\Gamma$. Then, $g=k+l_1 +\cdots +l_k$. On the other hand, because there are $k$ attached pendant stars in $\Gamma$, we have $i_-(\Gamma)=k+i_-(P_{l_1}^\sigma)+\cdots +i_-(P_{l_k}^\sigma)$ by Lemma \ref{lem-2-2}. The following two cases can be considered:

{\flushleft\bf Case 1.} $g\equiv 1,3$ (mod 4).

Since $g$ is an odd in this case, we have $i_-(\Gamma)=\lceil \frac{g}{2}\rceil+1$ if and only if
$$k+i_-(P_{l_1}^\sigma)+\cdots +i_-(P_{l_k}^\sigma)=i_-(\Gamma)=\lceil \frac{g}{2}\rceil+1=\frac{g+1}{2}+1=\frac{k+l_1 +\cdots +l_k+3}{2}.$$
Then,
$$k-3=[l_1-2i_-(P_{l_1}^\sigma)]+[l_2-2i_-(P_{l_2}^\sigma)]+\cdots +[l_k-2i_-(P_{l_k}^\sigma)].$$
According to Lemma \ref{lem-2-1}, $l_j-2i_-(P_{l_j}^\sigma)=0$ if $l_j$ is even and $l_j-2i_-(P_{l_j}^\sigma)=1$ if $l_j$ is odd for $j\in \{1,\ldots,k\}$. Consequently, $i_-(\Gamma)=\lceil \frac{g}{2}\rceil+1$ if and only if
three of $l_1, \ldots, l_k$ are even, as desired.

{\flushleft\bf Case 2.} $g\equiv 0, 2$ (mod 4).

Since $g$ is an even, we have $i_-(\Gamma)=\lceil \frac{g}{2}\rceil+1$ if and only if
$$k+i_-(P_{l_1}^\sigma)+\cdots +i_-(P_{l_k}^\sigma)=i_-(\Gamma)=\lceil \frac{g}{2}\rceil+1=\frac{g}{2}+1=\frac{k+l_1 +\cdots +l_k+2}{2}.$$
Then,
$$k-2=[l_1-2i_-(P_{l_1}^\sigma)]+[l_2-2i_-(P_{l_2}^\sigma)]+\cdots +[l_k-2i_-(P_{l_k}^\sigma)].$$
By a similar discussion as Case 1, $i_-(\Gamma)=\lceil \frac{g}{2}\rceil+1$ holds if and only if two of $l_1,\ldots, l_k$ are even. It follows the desired conclusion.
\end{proof}

Let $G_1$ be a graph containing vertex $u$, and let $G_2$ be a graph of order $n$ that is disjoint from $G_1$. For $1\leq k\leq n$, the \emph{$k$-joining graph} of $G_1$ and $G_2$ with respect to $u$ is obtained from $G_1\cup G_2$ by joining $u$ and any $k$ vertices of $G_2$, and it is denoted by $G_1(u)\odot^kG_2$. Clearly, the graph $G_1(u)\odot^kG_2$ is not uniquely determined when $n>k$.

\begin{figure}[htbp]
    \centering
    \includegraphics[width=0.9\linewidth]{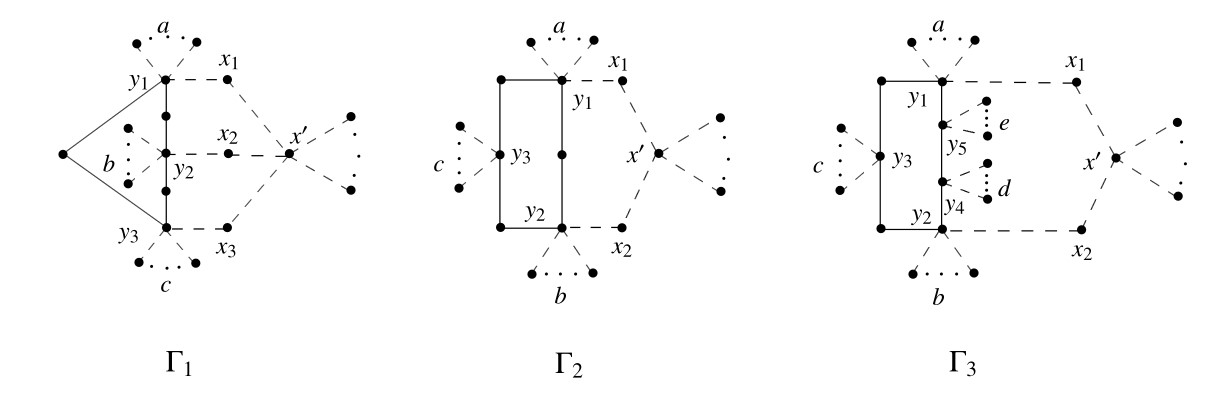}
    \caption{The signed graphs $\Gamma_1$, $\Gamma_2$ and $\Gamma_3$}
    \label{fig-1}
    \end{figure}

We present a method for representing signed graphs that will be used in this section. Two vertices connected by a solid line (resp., dotted line) indicate that the corresponding edge is positive (resp., negative), while those connected by a dashed line indicate that the sign of the edge is undetermined.

\begin{thm}\label{thm-2-2}
Let $\Gamma=(G,\sigma)$ be a connected triangle-free signed graph with girth $g\equiv 2,3(\bmod~4)$. Suppose that there exists a balanced signed cycle $C_g^\sigma$ in $\Gamma$ and $\Gamma$ is not a canonical signed unicyclic graph. If $N_3(v,C_g^\sigma)\neq\emptyset$, then $i_-(\Gamma)=\lceil\frac{g}{2}\rceil+1$ if and only if $\Gamma$ is isomorphic to one of the following signed graphs:
\setlength{\itemsep}{0pt}
{\item (1) The signed graphs $\Gamma_1$ and $\Gamma_2$, where $a,b,c$ are non-negative integers;
\item (2) The signed graph $\Gamma_3$, where $a,b,c,d,e$ are non-negative integers and $d, e$ cannot both be non-zero;
\item (3) The signed graph $K_{1, t}^\sigma(x')\odot^1\Gamma'$, where $x'$ is the center vertex of $K_{1, t}^\sigma$, $\Gamma'$ is one of the signed graphs shown in Theorem \ref{thm-1-1}(2)(3) with balanced signed cycle $C_g^\sigma$ for $g\equiv 2,3(\bmod~4)$, and $x'$ is adjacent to exactly one pendant vertex of $\Gamma'$. }
\end{thm}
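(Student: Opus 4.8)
The plan is to prove both directions using the pendant-deletion identity of Lemma~\ref{lem-2-2} as the main engine, together with the monotonicity of $i_-$ under induced subgraphs (Lemma~\ref{lem-2-1-1}) and the value $i_-(C_g^\sigma)=\lceil\frac g2\rceil$ for a balanced cycle with $g\equiv 2,3\pmod 4$ (Lemma~\ref{lem-2-1}). The single computation underlying everything is the following: if $T_\ell$ denotes $C_g^\sigma$ with a pendant path of $\ell$ new vertices attached at one cycle vertex, then repeatedly deleting the pendant together with its neighbor (Lemma~\ref{lem-2-2}) gives $i_-(T_\ell)=\lceil\frac g2\rceil+\lfloor\frac\ell2\rfloor$, with base cases $i_-(T_0)=i_-(T_1)=\lceil\frac g2\rceil$. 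In particular $i_-(T_2)=i_-(T_3)=\lceil\frac g2\rceil+1$ while $i_-(T_4)=\lceil\frac g2\rceil+2$.

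For sufficiency I would treat each family directly by peeling. For family (3), pick any leaf $\ell$ of the star $K_{1,t}^\sigma$ and delete $\{\ell,x'\}$: the pendant vertex $p$ of $\Gamma'$ to which $x'$ is attached reverts to a pendant of $\Gamma'$ and the remaining $t-1$ leaves become isolated, so Lemma~\ref{lem-2-2} gives $i_-(\Gamma)=i_-(\Gamma')+1=\lceil\frac g2\rceil+1$ by Theorem~\ref{thm-1-1}. For $\Gamma_1,\Gamma_2,\Gamma_3$ the same idea applies: strip the pendant stars and the depth-$3$ path by Lemma~\ref{lem-2-2}, reducing each graph to $C_g^\sigma$ (plus isolated vertices), and check that exactly one deletion beyond those needed to reach the bare cycle is forced.

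The substance is the necessity direction. Assume $i_-(\Gamma)=\lceil\frac g2\rceil+1$ with $N_3(v,C_g^\sigma)\neq\emptyset$; note $g\ge 6$ since $g\equiv 2,3\pmod4$ and $\Gamma$ is triangle-free. A vertex of $N_3$ yields an induced $T_3$, so $i_-(\Gamma)\ge\lceil\frac g2\rceil+1$ holds automatically and the hypothesis becomes an upper bound: by Lemma~\ref{lem-2-1-1} no induced subgraph may have negative index $\lceil\frac g2\rceil+2$. A vertex of $N_r$ with $r\ge4$ would give an induced $T_4$, hence $N_r=\emptyset$ for $r\ge4$. Next, Lemma~\ref{lem-2-5} (for $g\ge5$) forces every off-cycle vertex to meet $C_g^\sigma$ in at most one vertex, and Lemmas~\ref{lem-2-5} and~\ref{lem-2-1-5} together forbid short reconnections, pinning the underlying cycle structure. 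The crucial local claim is that the part of $\Gamma$ at distance $\ge2$ from $C_g^\sigma$ is a single star centered at a vertex $x'\in N_2$ with all its other leaves in $N_3$, attached to one vertex $p\in N_1$: any second independent depth-$\ge2$ branch, or a depth-$2$ pendant coexisting with the star, can be peeled by Lemma~\ref{lem-2-2} into two disjoint $+1$ contributions, forcing $i_-\ge\lceil\frac g2\rceil+2$.

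With the deep part identified as one star contributing exactly $+1$, deleting a star-leaf together with $x'$ reduces $\Gamma$ to a graph $\Gamma'$ at distance $\le1$ from $C_g^\sigma$ with $i_-(\Gamma')=\lceil\frac g2\rceil$. When $\Gamma$ is unicyclic this $\Gamma'$ is an extremal canonical signed unicyclic graph, so Theorem~\ref{thm-1-1}(2)(3) identifies it and yields family (3); the balancedness and the residual sign freedom are normalized using Theorem~\ref{thm-1-0} and Lemma~\ref{lem-2-11}. When $\Gamma$ carries an extra cycle (still of length $\ge g$), a finite analysis constrained by Lemmas~\ref{lem-2-5} and~\ref{lem-2-1-5} leaves only $\Gamma_1,\Gamma_2,\Gamma_3$, the restriction ``$d,e$ cannot both be non-zero'' in $\Gamma_3$ being exactly the condition ruling out two independent depth-$2$ branches. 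I expect the main obstacle to be this last classification: showing that the forbidden-subgraph constraint $i_-\le\lceil\frac g2\rceil+1$ admits nothing beyond these three families, in particular the bounded but exhaustive case check for the non-unicyclic exceptions and the verification that the parameter ranges $a,b,c,d,e$ and the admissible sign choices never secretly create an induced $T_4$ or a second independent depth-$2$ branch.
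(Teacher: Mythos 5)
Your necessity argument hinges on the ``crucial local claim'' that the portion of $\Gamma$ at distance $\geq 2$ from $C_g^\sigma$ is a single star whose center $x'\in N_2(v,C_g^\sigma)$ is attached to exactly \emph{one} vertex of $N_1(v,C_g^\sigma)$. This claim is false, and it fails exactly on the graphs the theorem classifies: in $\Gamma_1$ the star center is adjacent to three vertices of $N_1(v,C_g^\sigma)$, and in $\Gamma_2$ and $\Gamma_3$ to two, these multiple attachments being precisely what creates the extra cycles. Your peeling argument cannot exclude such configurations: two $N_1$-neighbors of the same center $x'$ are not two ``independent depth-$\geq 2$ branches'', and indeed no induced subgraph of $\Gamma_1,\Gamma_2,\Gamma_3$ has negative index $\lceil\frac{g}{2}\rceil+2$. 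So the claim cannot be repaired; it must be replaced by a case distinction. That is what the paper does: it first proves $|N_2(v,C_g^\sigma)|=1$ (itself requiring care in the subcase where two $N_2$-vertices share a common neighbor in $N_3$ --- there the two branches are not disjoint, naive peeling fails, and the paper instead builds an induced subgraph from two distinct $N_1$-attachments using $g\geq 6$), then bounds the number $k$ of $N_1$-neighbors of $x'$ by $3$ via the fan Lemma \ref{lem-2-1-5}, and treats $k=3,2,1$ separately; your argument is essentially only the case $k=1$, which yields family (3).

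Relatedly, what you defer as ``a finite analysis\ldots leaves only $\Gamma_1,\Gamma_2,\Gamma_3$'' is not a detail but the bulk of the paper's proof: Lemma \ref{lem-2-1-5} pins $g=6$ when $k=3$ and $g\in\{6,7\}$ when $k=2$; one must then show that every further vertex of $N_1(v,C_g^\sigma)$ can attach only at the distinguished cycle vertices $y_i$, which the paper does through the explicit forbidden subgraphs $H_1^\sigma$ and $H_2^\sigma$ (whose negative index $5$ exceeds $\lceil\frac{6}{2}\rceil+1$), and finally extract the parameter constraints. Your stated source for the constraint that ``$d,e$ cannot both be non-zero'' in $\Gamma_3$ is also wrong: $d$ and $e$ count \emph{pendant} ($N_1$-) vertices attached to the cycle vertices $y_4,y_5$ of the $7$-cycle, not depth-$2$ branches, so that restriction does not follow from your peeling claim; it comes from the inertia computation, in the spirit of the parity conditions of Theorem \ref{thm-1-1}. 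Your sufficiency sketch, the depth bound $N_r(v,C_g^\sigma)=\emptyset$ for $r\geq 4$, and the reduction to Theorem \ref{thm-1-1} in the $k=1$ case do match the paper.
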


\begin{proof}
By a simple operation, we can verify that the negative inertia index of each of the signed graphs shown in (1) (2) and (3) is $\lceil\frac{g}{2}\rceil+1$. Conversely,
let $\Gamma$ be a connected triangle-free signed graph with girth $g\equiv 2,3(\bmod~4)$ and $i_-(\Gamma)=\lceil\frac{g}{2}\rceil+1$. Then $g\geq 6$, and it may be assumed that each edge of $C_g^\sigma$ is positive by Theorem \ref{thm-1-0}. Note $i_-(C_g^\sigma)=\lceil\frac{g}{2}\rceil$ by Lemma \ref{lem-2-1}. We have $N_4(v,C_g^\sigma)=\emptyset$. In fact, on the contrary, suppose that $x'''\in N_4(v,C_g^\sigma)\neq\emptyset$. Then, we may assume $x'''\sim x''\in N_3(v,C_g^\sigma)$, $x''\sim x'\in N_2(v,C_g^\sigma)$ and $x'\sim x\in N_1(v,C_g^\sigma)$.  According to Lemmas \ref{lem-2-2} and \ref{lem-2-1-1},  a contradiction
$$i_-(\Gamma)\geq i_-(\Gamma[V(C_g^\sigma)\cup\{x,x',x'',x'''\}])=i_-(\Gamma[V(C_g^\sigma)])+2=\lceil\frac{g}{2}\rceil +2>\lceil\frac{g}{2}\rceil+1$$ arise.
Hence, $N_i(v,C_g^\sigma)=\emptyset$ for $i\geq 4$. Note $N_3(v,C_g^\sigma)\neq\emptyset$. We give the following claim.

\begin{claim}
$|N_2(v,C_g^\sigma)|=1$.
\end{claim}
\begin{proof}
Let $x''\in N_3(v,C_g^\sigma)\neq\emptyset$ and $x''\sim x_1'\in N_2(v,C_g^\sigma)$. Suppose $|N_2(v,C_g^\sigma)|\geq 2$. Set $x_1'\neq x_2'\in N_2(v,C_g^\sigma)$ and $x_2'\sim x_2\in N_1(v,C_g^\sigma)$. If $x_2'\not\sim x''$, then $$i_-(\Gamma)\geq i_-(\Gamma[V(C_g^\sigma)\cup \{x_2,x_1',x_2',x''\}])=i_-(C_g^\sigma)+2>\left \lceil \frac{g}{2}\right\rceil+1$$ by Lemmas \ref{lem-2-2} and \ref{lem-2-1-1}, a contradiction. Hence $x_2'\sim x''$, and thus $x_1'\not\sim x_2'$. Furthermore, there exists $x_2\neq x_1\in N_1(v,C_g^\sigma)$ such that $x_1'\sim x_1$ because $g\geq 6$. A contradiction $i_-(\Gamma)\geq i_-(\Gamma[V(C_g^\sigma)\cup \{x_1, x_2, x_1',x_2'\}])=i_-(C_g^\sigma)+2>\left \lceil \frac{g}{2}\right\rceil+1$ also follows from Lemmas \ref{lem-2-2} and \ref{lem-2-1-1}. Then $|N_2(v,C_g^\sigma)|=1$.
\end{proof}

We may assume $N_2(v,C_g^\sigma)=\{x'\}$. Since each vertex of $N_3(v,C_g^\sigma)$ is adjacent to $x'$ and $\Gamma$ is triangle-free, we get that $N_3(v,C_g^\sigma)$ is an independent set. Assume that $x'$ is adjacent to four vertices $x_1,x_2,$ $x_3,x_4$ of $N_1(v,C_g^\sigma)$. Then, there is a $4$-fan of length 2 from $x'$ to $V(C_g^\sigma)$ since $g\geq 6$. Consequently, $\lfloor\frac{g}{4}\rfloor+4\geq g$ by Lemma \ref{lem-2-1-5}, equivalently, $\lceil\frac{3g}{4}\rceil \leq 4$, which implies $g\leq 5$, a contradiction. Hence $x'$ is adjacent to at most three vertices of $N_1(v,C_g^\sigma)$ and the following three cases can be consider:

{\flushleft\bf Case 1.} $x'$ is adjacent to exactly three vertices $x_1,x_2$ and $x_3$ of $N_1(v,C_g^\sigma)$.

Note that $x_1, x_2, x_3$ have no common neighbors in $V(C_g^\sigma)$. We may assume that $x_i\sim y_i$ for $i\in \{1,2,3\}$. Then there exists a 3-fan of length 2 from $x'$ to $V(C_g^\sigma)$. Thus $\lfloor\frac{g}{3}\rfloor+4\geq g$ by Lemma \ref{lem-2-1-5}, and so $g=6$.

Let $x\in N_1(v,C_6^\sigma)\backslash \{x_1, x_2, x_3\}$ (if exists). Then, by Lemma \ref{lem-2-5}, $x$ is adjacent to exactly one vertex of $V(C_6^\sigma)$. If $x$ is adjacent to a vertex of $V(C_6^\sigma)$ different from $y_1, y_2$ and $y_3$, then $\Gamma[V(C_6^\sigma)\cup N_3(v,C_g^\sigma)\cup \{x_1, x_2, x_3, x, x'\}]\cong H_1^\sigma$ (see Fig.\ref{fig-2}). It is a contradiction since $i_-(\Gamma)\geq i_-(H_1^\sigma)=5>4=\lceil\frac{6}{2}\rceil+1$. Then each vertex of $N_1(v,C_g^\sigma)$ must be adjacent one of $y_1$, $y_2$ and $y_3$, and thus $N_1(v,C_g^\sigma)$ is an independent set. Therefore $\Gamma\cong \Gamma_1$ (see Fig. \ref{fig-1}), where $a, b$ and $c$ are non-negative integers.

{\flushleft\bf Case 2.} $x'$ is adjacent to exactly two vertices $x_1, x_2$ of $N_1(v,C_g^\sigma)$.

Since $x_1, x_2$ have no common neighbors in $V(C_g^\sigma)$, we may assume that $x_i\sim y_i$ for $i\in \{1,2\}$. There exists a 2-fan of length 2 from $x'$ to $V(C_g^\sigma)$. Thus $\lfloor\frac{g}{2}\rfloor+4\geq g$ by Lemma \ref{lem-2-1-5}, which implies $g=6$ or $7$.

Assume $g=6$. If $\Gamma[V(C_6^\sigma)\cup N_3(v,C_g^\sigma)\cup\{x', x_1, x_2\})]\cong H_2^\sigma$ (see Fig. \ref{fig-2}), then a contradiction $i_-(\Gamma)\geq i_-(H_2^\sigma)=5>4=\lceil\frac{6}{2}\rceil+1$ arise. Hence $\Gamma[V(C_6^\sigma)\cup N_3(v,C_g^\sigma)\cup\{x', x_1, x_2\}]\cong \Gamma_2$, where $a,b,c$ are all zero (see Fig. \ref{fig-1}). Let $x\in N_1(v,C_g^\sigma)$ (if it exists) be different from $x_1$ and $x_2$. Similar to Case 1, $x$ must be adjacent to exactly one of $y_1, y_2$ and $y_3$, and thus $N_1(v,C_g^\sigma)$ is an independent set. We get that $\Gamma$ is isomorphic to the signed graph $\Gamma_2^\sigma$, where $a, b$ and $c$ are non-negative integers.

Assume $g=7$. Then $\Gamma[V(C_7^\sigma)\cup N_3(v,C_g^\sigma)\cup\{x', x_1, x_2\})]\cong \Gamma_3$, where $a=b=c=d=e=0$ (see Fig. \ref{fig-1}). Similar to Case 1, each vertex of $N_1(v,C_g^\sigma)\setminus \{x_1, x_2\}$ (if it exists) is adjacent to exactly one of $y_1, y_2, y_3, y_4$ and $y_5$, and thus $N_1(v,C_g^\sigma)$ is an independent set. We obtain that $\Gamma$ is isomorphism to $\Gamma_3$ (see Fig. \ref{fig-1}), where $a, b, c, d$ and $e$ are non-negative integers, and $d, e$ cannot both be non-zero.

\begin{figure}[htbp]
    \centering
    \includegraphics[width=0.6\linewidth]{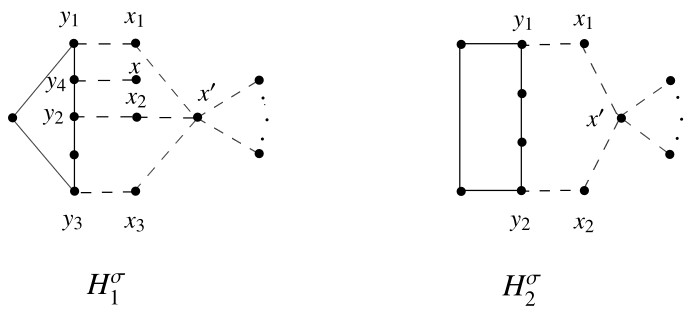}
    \caption{The signed graphs $H_1^\sigma$ and $H_2^\sigma$}
    \label{fig-2}
    \end{figure}

{\flushleft\bf Case 3.} $x'$ is adjacent to exactly one vertex $x_1$ of $N_1(v,C_g^\sigma)$.

Let $x''\in N_3(v,C_g^\sigma)$. By deleting the pendant edge $x''x'$ from $\Gamma$, we have $$\lceil\frac{g}{2}\rceil+1=i_-(\Gamma)=i_-(\Gamma[V(C_g^\sigma)\cup N_1(v,C_g^\sigma)])+1$$ due to Lemma \ref{lem-2-1-1}. Hence $i_-(\Gamma[V(C_g^\sigma)\cup N_1(v,C_g^\sigma)])=\lceil\frac{g}{2}\rceil$. Note that $\Gamma[V(C_g^\sigma)\cup N_1(v,C_g^\sigma)]$ is a canonical signed unicyclic graph by Remark \ref{re-1}. We get $\Gamma \cong K_{1, t}^\sigma(x')\odot^1\Gamma'$, where $\Gamma'$ is one of the signed graphs shown in Theorem \ref{thm-1-1}(2)(3) with balanced signed cycle $C_g^\sigma$ for $g\equiv 2,3(\bmod~4)$, and $x'$ is adjacent to exactly one pendant vertex of $\Gamma'$. We are done.
\end{proof}

\begin{thm}\label{thm-2-3}
Let $\Gamma=(G,\sigma)$ be a connected triangle-free signed graph with girth $g\equiv 2,3(\bmod~4)$. Suppose that there exists a balanced signed cycle $C_g^\sigma$ in $\Gamma$ and $\Gamma$ is not a canonical signed unicyclic graph. If $N_3(v,C_g^\sigma)=\emptyset$, then $i_-(\Gamma)=\lceil\frac{g}{2}\rceil+1$ if and only if $\Gamma$ is isomorphic to one of the following signed graphs:
\setlength{\itemsep}{0pt}
\item (1) The signed graph $(B(4,4,5),-)$. (see Fig. \ref{fig-4});
\item (2) The signed graphs $\Gamma_5$ (see Fig. \ref{fig-4}), where $a,b,c$ are non-negative integers;
\item (3)The signed graph $\Gamma_6$ (see Fig. \ref{fig-4}), where $a, b, c$ and $d$ are non-negative integers, and $d$ must be zero if any one of $a, b, c$ is non-zero;
\item (4)The signed graph $\Gamma_7$ (see Fig. \ref{fig-4}), where $a, b, c ,d$ and $e$ are non-negative integers and if $a, b, c$ are not all zero then $d=e=0$;
\item (5) The signed graph $\Gamma_8$ (see Fig. \ref{fig-4}), where $a, b, c, d$ and $e$ are non-negative integers. Moreover, at most three of $a, b, c ,d$ and $e$ are non-zero,
and $a, b, c$ are all zero whenever one of $d, e$ is non-zero;
\item (6) The signed graph $\Gamma_9$ (see Fig. \ref{fig-4}), where $a, b, c, d, e, f, g$ are non-negative integers. Moreover, at most three of them are non-zero, namely $a,b,c$. If exactly two of them are non-zero, then these two are either two from $a,b,c$, or one of $ae, ad, be, bd, fg$.
\item (7) The signed graph $K_{1, t}^\sigma(x)\odot^k\Gamma'$, where $x$ is the center vertex of $K_{1, t}^\sigma$, $\Gamma'$ is one of the signed graphs shown in Theorem \ref{thm-1-1}(1),(2) and (3) with balanced signed cycle $C_g^\sigma$ for $g\equiv 2,3(\bmod~4)$, and $x$ is adjacent to at least one vertex in $C_g^\sigma$ of $\Gamma'$.
\end{thm}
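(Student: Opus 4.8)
The plan is to treat the two directions separately and spend essentially all the effort on necessity. For sufficiency I would check, for each of $(B(4,4,5),-)$, $\Gamma_5,\dots,\Gamma_9$ and $K_{1,t}^\sigma(x)\odot^k\Gamma'$, that $i_-=\lceil g/2\rceil+1$; each such check reduces by repeated pendant deletion (Lemma~\ref{lem-2-2}) to a signed cycle or path whose index is read off from Lemma~\ref{lem-2-1}. For necessity I would first install the standing reductions exactly as in Theorem~\ref{thm-2-2}: since $g\equiv 2,3\pmod4$ and $\Gamma$ is triangle-free, $g\geq 6$; by Theorem~\ref{thm-1-0} I may switch so that every edge of $C_g^\sigma$ is positive, and then $i_-(C_g^\sigma)=\lceil g/2\rceil$ by Lemma~\ref{lem-2-1}. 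Because $N_3(v,C_g^\sigma)=\emptyset$ and any shortest path reaching a deeper layer must pass through the third layer, $N_r(v,C_g^\sigma)=\emptyset$ for all $r\geq 3$, so $V(\Gamma)=V(C_g^\sigma)\cup N_1(v,C_g^\sigma)\cup N_2(v,C_g^\sigma)$.

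The central quantitative idea is that the cycle already exhausts $\lceil g/2\rceil$ of the budget, leaving room for at most one further negative eigenvalue; hence every piece attached at distance $1$ or $2$ must be added essentially ``for free.'' I would exploit this through two mechanisms. First, Lemma~\ref{lem-2-1-5}: for an $N_2$-vertex $w$ with $k$ neighbours in $N_1(v,C_g^\sigma)$ there is a $k$-fan of length $2$ to $C_g^\sigma$, forcing $\lfloor g/k\rfloor+4\geq g$; this bounds the number of $N_1$-neighbours of each $N_2$-vertex and, as soon as $k\geq 2$, pins $g$ down to $6$ or $7$. Second, Lemmas~\ref{lem-2-2} and~\ref{lem-2-1-1}: whenever a local configuration around $N_2(v,C_g^\sigma)$ contains an induced subgraph of index $\lceil g/2\rceil+2$ I obtain the contradiction $i_-(\Gamma)>\lceil g/2\rceil+1$, which is exactly the device that excluded $H_1^\sigma$ and $H_2^\sigma$ in Theorem~\ref{thm-2-2}. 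Together these confine the admissible neighbourhood of $C_g^\sigma$ to a short list of local patterns.

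I would then split into two regimes. If no $N_2$-vertex lies on a new cycle, i.e.\ every $N_2$-vertex is pendant to $N_1(v,C_g^\sigma)$, then deleting such a pendant vertex together with its neighbour drops $i_-$ by $1$ (Lemma~\ref{lem-2-2}), so the remainder $\Gamma[V(C_g^\sigma)\cup N_1(v,C_g^\sigma)]$ must satisfy $i_-=\lceil g/2\rceil$; this remainder is a canonical signed unicyclic graph by Remark~\ref{re-1}, whence Theorem~\ref{thm-1-1} forces it into the stated list, and reassembling the star yields family~(7), $K_{1,t}^\sigma(x)\odot^k\Gamma'$. In the complementary regime some $N_2$-vertex closes a longer cycle through $N_1(v,C_g^\sigma)$; the fan bound then gives $g\in\{6,7\}$, and I am left with a finite enumeration of attachment patterns which I would organise by the number and the positions of the pendant blocks. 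Matching each pattern against the inertia bookkeeping produces $(B(4,4,5),-)$ and the parametrised families $\Gamma_5,\dots,\Gamma_9$ of Fig.~\ref{fig-4}.

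The main obstacle will be this last enumeration: deciding precisely which combinations of the pendant counts $a,b,c,d,e,f,g$ keep $i_-$ equal to $\lceil g/2\rceil+1$ rather than one larger. Each side-condition in the statement---``at most three of them are non-zero,'' ``$d=e=0$ whenever $a,b,c$ are not all zero,'' and the admissible pairs listed for $\Gamma_9$---encodes a tight computation in which one extra non-zero branch inserts an induced subgraph of index $\lceil g/2\rceil+2$. The delicate work is to verify both implications of each such condition simultaneously and to confirm completeness, namely that the patterns surviving the fan and induced-subgraph bounds are exactly those displayed, with no overlaps or omissions. Throughout, the parity hypothesis $g\equiv 2,3\pmod4$ must be tracked carefully, since it is precisely what guarantees $i_-(C_g^\sigma)=\lceil g/2\rceil$ and thereby fixes the one-eigenvalue budget on which the whole argument rests.
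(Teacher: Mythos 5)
The main gap is in your second regime: you only treat the configuration in which a single vertex $x'\in N_2(v,C_g^\sigma)$ has several neighbours in $N_1(v,C_g^\sigma)$ (a fan apex). You never address the configuration in which $N_2(v,C_g^\sigma)$ fails to be an independent set, i.e. two adjacent vertices $x_1'\sim x_2'$ in $N_2$, each attached to the cycle through its own $N_1$-neighbour. This is a genuine, separate case (it is Case 2 of the paper's proof and consumes most of its length), and your tools do not dispose of it. There is no $k$-fan of length $2$ from a single apex there, so Lemma~\ref{lem-2-1-5} gives nothing; the applicable bound is Lemma~\ref{lem-2-5} applied to the induced path $y_1x_1x_1'x_2'x_2y_2$ of length $5$, which yields $\lfloor\frac{g}{2}\rfloor+5\geq g$, hence $g\in\{6,7,10\}$ --- in particular $g=10$ survives, contradicting your claimed reduction to $g\in\{6,7\}$. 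Moreover, these cases cannot be settled by your induced-subgraph budget device, because for the resulting theta graphs the negative index depends on the edge signs, not just on the underlying graph: the paper rules out $g=6$ by explicit congruence computations on $(B(5,3,6),\sigma)$, $(B(6,2,6),\sigma)$ and $(B(4,4,6),\sigma)$, rules out $g=10$ via $(B(6,6,6),\sigma)$, and for $g=7$ shows that $i_-=\lceil\frac{7}{2}\rceil+1$ holds precisely when a product-of-signs condition is satisfied, all such sign patterns being switching equivalent to $(B(5,4,6),+)$ resp. $(B(6,3,6),-)$. That analysis produces further extremal families ($\Gamma_{10}$ and $\Gamma_{11}$ in Fig.~\ref{fig-4}) which do not appear in your predicted output list at all, so your enumeration cannot be complete as planned. (The paper's own statement omits $\Gamma_{10},\Gamma_{11}$ even though its proof derives them; but whichever way that internal discrepancy is resolved, a correct proof must confront this adjacent-$N_2$ case, and yours does not.)

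Two smaller defects. First, in your pendant regime you delete one pendant $N_2$-vertex together with its neighbour and assert the remainder is $\Gamma[V(C_g^\sigma)\cup N_1(v,C_g^\sigma)]$; this holds only if all pendant $N_2$-vertices hang from a common vertex $x\in N_1(v,C_g^\sigma)$. You must first prove, as the paper does with exactly the induced-subgraph contradiction you cite as ``Mechanism 2,'' that any two vertices of $N_2(v,C_g^\sigma)$ have equal neighbourhoods, which forces the common support and hence the star $K_{1,t}^\sigma$. Second, your sufficiency plan (``reduce by repeated pendant deletion to a signed cycle or path'') fails for the pendant-free graph $(B(4,4,5),-)$ and for certifying the bicyclic cores underlying $\Gamma_6,\ldots,\Gamma_9$; for those one needs a direct computation, e.g. the elementary congruence transformations combined with Lemma~\ref{lem-2-0} that the paper uses throughout.
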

\begin{proof}
It may be assumed that each edge of $C_g^\sigma$ is positive by Theorem \ref{thm-1-0}. By Lemma \ref{lem-2-5}, any vertex in $N_1(v,C_g^\sigma)$ is adjacent to exactly one vertex of $V(C_g^\sigma)$ since $g\geq 6$. Note that $\Gamma$ is not a canonical unicyclic graph. We have $N_2(v,C_g^\sigma)\not=\emptyset$ and the following two cases can be distinguished.

{\flushleft\bf Case 1.} $N_2(v,C_g^\sigma)$ is an independent set.

Let $x_1', x_2'\in N_2(v,C_g^\sigma)$. Suppose that $N(x_1')\neq N(x_2')$. We may set $x_2'\sim x_2\in N_1(v,C_g^\sigma)$ and $x_1'\not\sim x_2$. Let $x_1'\sim x_1$. Then
$$i_-(\Gamma)\geq i_-(\Gamma[V(C_g^\sigma)\cup \{x_1,x_2,x_1',x_2'\}])=i_-(C_g^\sigma)+2>\left \lceil \frac{g}{2}\right\rceil+1$$
by Lemmas \ref{lem-2-2} and \ref{lem-2-1-1}, a contradiction. Hence $N(x_2')=N(x_1')$, which implies that $|N(x_2')|=|N(x_2')|=1$ since $g\geq 6$. Therefore, except when $|N_2(v,C_g^\sigma)|=1$, each vertex in $N_2(v,C_g^\sigma)$ has degree 1, and they share a common neighbor, say $x$, in $N_1(v,C_g^\sigma)$. Note that $\left\lceil \frac{g}{2}\right\rceil+1=i_-(\Gamma)=i_-(\Gamma-x-N_2(v,C_g^\sigma))+1$. We have $i_-(\Gamma-x-N_2(v,C_g^\sigma))=\left\lceil \frac{g}{2}\right\rceil$. Moreover, since $\Gamma-x-N_2(v,C_g^\sigma)$ is a canonical unicyclic graph by Remark \ref{re-1}, $\Gamma$ is isomorphic to the signed graph $K_{1, t}^\sigma(x)\odot^k\Gamma'$, where $\Gamma'$ is one of the signed graphs shown in Theorem \ref{thm-1-1}(1),(2) and (3) with balanced signed cycle $C_g^\sigma$ for $g\equiv 2,3(\bmod~4)$, and $x$ is adjacent to at least one vertex in $C_g^\sigma$ of $\Gamma'$.

Now we only need to consider the case that $|N_2(v,C_g^\sigma)|=1$ and the vertex in $N_2(v,C_g^\sigma)$ is adjacent to at least two vertices of $N_1(v,C_g^\sigma)$. Set $N_2(v,C_g^\sigma)=\{x'\}$. If $x'$ is adjacent to four vertices in $N_1(v,C_g^\sigma)$, then there is a $4$-fan of length 2 from $x'$ to $V(C_g^\sigma)$ since $g\geq 6$. Consequently, $\lfloor\frac{g}{4}\rfloor+4\geq g$ by Lemma \ref{lem-2-1-5}, which implies $g\leq 5$, a contradiction. Hence $x'$ is adjacent to at most three vertices in $N_1(v,C_g^\sigma)$.

{\flushleft\bf Subcase 1.} $x'$ is adjacent to exactly three vertices $x_1,x_2,$ $x_3$ in $N_1(v,C_g^\sigma)$.

We may assume that $x_i\sim y_i$ for $i\in \{1,2,3\}$ since $x_1, x_2, x_3$ have no common neighbors in $V(C_g^\sigma)$. There exists a 3-fan of length 2 from $x'$ to $V(C_g^\sigma)$. Thus $\lfloor\frac{g}{3}\rfloor+4\geq g$ by Lemma \ref{lem-2-1-5}, which implies that $g=6$. Then $\Gamma[V(C_6^\sigma)\cup \{x_1, x_2, x_3, x'\}]\cong \Gamma_4$ (see Fig.\ref{fig-4}). In the order of $y_1, y_2, y_3, y_4, y_5, y_6, x_1, x_2, x_3, x'$, $A(\Gamma_4)$ can be written as
$$A(\Gamma_4)=\left(
        \begin{array}{cccccccccc}
          0 & 0 & 0 & 1 & 1 & 0 & a_{17}^\sigma & 0 & 0 & 0\\
          0 & 0 & 0 & 0 & 1 & 1 & 0 & a_{28}^\sigma & 0 & 0\\
          0 & 0 & 0 & 1 & 0 & 1 & 0 & 0 & a_{39}^\sigma & 0\\
          1 & 0 & 1 & 0 & 0 & 0 & 0 & 0 & 0 & 0\\
          1 & 1 & 0 & 0 & 0 & 0 & 0 & 0 & 0 & 0\\
          0 & 1 & 1 & 0 & 0 & 0 & 0 & 0 & 0 & 0\\
          a_{17}^\sigma & 0 & 0 & 0 & 0 & 0 & 0 & 0 & 0 & a_{710}^\sigma\\
          0 & a_{28}^\sigma & 0 & 0 & 0 & 0 & 0 & 0 & 0 & a_{810}^\sigma\\
          0 & 0 & a_{39}^\sigma & 0 & 0 & 0 & 0 & 0 & 0 & a_{910}^\sigma\\
          0 & 0 & 0 & 0 & 0 & 0 & a_{710}^\sigma & a_{810}^\sigma & a_{910}^\sigma & 0\\
        \end{array}
      \right).$$
Applying elementary congruence matrix operations to $A(\Gamma_4)$, it is congruent to

$$B_1=\left(
        \begin{array}{cccccccccccc}
          0 & 1 & 0 & 0 & 0 & 0 & 0 & 0 & 0 & 0\\
          1 & 0 & 0 & 0 & 0 & 0 & 0 & 0 & 0 & 0\\
          0 & 0 & 0 & 1 & 0 & 0 & 0 & 0 & 0 & 0\\
          0 & 0 & 1 & 0 & 0 & 0 & 0 & 0 & 0 & 0\\
          0 & 0 & 0 & 0 & 0 & 2 & 0 & 0 & 0 & 0\\
          0 & 0 & 0 & 0 & 2 & 0 & 0 & 0 & 0 & 0\\
          0 & 0 & 0 & 0 & 0 & 0 & 0 & 0 & 0 & 0\\
          0 & 0 & 0 & 0 & 0 & 0 & 0 & 0 & 0 & 0\\
          0 & 0 & 0 & 0 & 0 & 0 & 0 & 0 & 0 & a_{910}^\sigma\\
          0 & 0 & 0 & 0 & 0 & 0 & 0 & 0& a_{910}^\sigma & 0\\
        \end{array}
      \right).$$
The matrix $B_1$ has four negative eigenvalues. Let $x\in N_1(v,C_6^\sigma)$ $\backslash \{x_1, x_2, x_3\}$ (if it exists). Then, by Lemma \ref{lem-2-5}, $x$ is adjacent to exactly one vertex of $V(C_6^\sigma)$. Suppose that $x$ is adjacent to a vertex of $V(C_6^\sigma)$ other than $y_1, y_2$ and $y_3$. We find $\Gamma[V(C_6^\sigma)\cup \{x_1, x_2, x_3, x, x'\}]\cong H_3^\sigma$ (see Fig.\ref{fig-3}). In the order of $y_1, y_2, y_3, y_4, y_5, y_6, x_1, x_2, x_3, x, x'$, $A(H_3^\sigma)$ can be written as
$$A(H_3^\sigma)=\left(
        \begin{array}{ccccccccccc}
          0 & 0 & 0 & 1 & 0 & 1 & a_{17}^\sigma & 0 & 0 & 0 & 0\\
          0 & 0 & 0 & 1 & 1 & 0 & 0 & a_{28}^\sigma & 0 & 0 & 0\\
          0 & 0 & 0 & 0 & 1 & 1 & 0 & 0 & a_{39}^\sigma & 0 & 0\\
          1 & 1 & 0 & 0 & 0 & 0 & 0 & 0 & 0 & a_{410}^\sigma & 0\\
          0 & 1 & 1 & 0 & 0 & 0 & 0 & 0 & 0 & 0 & 0\\
          1 & 0 & 1 & 0 & 0 & 0 & 0 & 0 & 0 & 0 & 0\\
          a_{17}^\sigma & 0 & 0 & 0 & 0 & 0 & 0 & 0 & 0 & 0 & a_{711}^\sigma\\
          0 & a_{28}^\sigma & 0 & 0 & 0 & 0 & 0 & 0 & 0 & 0 & a_{811}^\sigma\\
          0 & 0 & a_{39}^\sigma & 0 & 0 & 0 & 0 & 0 & 0 & 0 & a_{911}^\sigma\\
          0 & 0 & 0 & a_{410}^\sigma & 0 & 0 & 0 & 0 & 0 & 0 & 0\\
          0 & 0 & 0 & 0 & 0 & 0 & a_{711}^\sigma & a_{811}^\sigma & a_{911}^\sigma & 0 & 0\\
        \end{array}
      \right).$$
Applying elementary congruence matrix operations to $A(H_3^\sigma)$, it is congruent to
$$B_2=\left(
        \begin{array}{ccccccccccccc}
          0 & 1 & 0 & 0 & 0 & 0 & 0 & 0 & 0 & 0 & 0\\
          1 & 0 & 0 & 0 & 0 & 0 & 0 & 0 & 0 & 0 & 0\\
          0 & 0 & 0 & 2 & 0 & 0 & 0 & 0 & 0 & 0 & 0\\
          0 & 0 & 2 & 0 & 0 & 0 & 0 & 0 & 0 & 0 & 0\\
          0 & 0 & 0 & 0 & 0 & 1 & 0 & 0 & 0 & 0 & 0\\
          0 & 0 & 0 & 0 & 1 & 0 & 0 & 0 & 0 & 0 & 0\\
          0 & 0 & 0 & 0 & 0 & 0 & 0 & a_{811}^\sigma+\frac{a_{28}^\sigma a_{911}^\sigma}{a_{39}^\sigma} & 0 & 0 & 0\\
          0 & 0 & 0 & 0 & 0 & 0 & a_{811}^\sigma+\frac{a_{28}^\sigma a_{911}^\sigma}{a_{39}^\sigma} & 0 & 0 & 0 & 0\\
          0 & 0 & 0 & 0 & 0 & 0 & 0 & 0 & 0 & 0 & 0\\
          0 & 0 & 0 & 0 & 0 & 0 & 0 & 0 & 0 & 0 & \frac{a_{39}^\sigma a_{410}^\sigma}{2}\\
          0 & 0 & 0 & 0 & 0 & 0 & 0 & 0 & 0 & \frac{a_{39}^\sigma a_{410}^\sigma}{2} & 0\\
        \end{array}
      \right).$$
Clearly, $B_2$ has five negative eigenvalues. A contradiction $i_-(\Gamma)\geq i_-(\Gamma[V(C_6^\sigma)\cup \{x_2,x_3,x,\\ x'\}])=5>4=\lceil\frac{6}{2}\rceil+1$ arises. Then each vertex of $N_1(v,C_g^\sigma)$ must be adjacent to exactly one of $y_1$, $y_2$ and $y_3$, and thus $N_1(v,C_g^\sigma)$ is an independent set. Therefore $\Gamma\cong \Gamma_5$ (see Fig. \ref{fig-4}), where $a, b$ and $c$ are non-negative integers.

{\flushleft\bf Subcase 2.} $x'$ is adjacent to exactly two vertices $x_1, x_2$ of $N_1(v,C_g^\sigma)$.

We may assume that $x_i\sim y_i$ for $i\in \{1,2\}$. There exists a 2-fan of length 2 from $x'$ to $V(C_g^\sigma)$. Thus $\lfloor\frac{g}{2}\rfloor+4\geq g$ by Lemma \ref{lem-2-1-5}, and so $g=6$ or $7$.

Assume $g=6$ and $N_1(v,C_6^\sigma)=\{x_1, x_2\}$. Since $i_-((B(4,4,5),+))=5>4=\lceil\frac{6}{2}\rceil+1$, $i_-((B(5,3,5),+))=i_-(B(5,3,5),-)=i_-(B(4,4,5),-)=4=\lceil\frac{6}{2}\rceil+1$ (see Fig. \ref{fig-4} and \ref{fig-3}). $\Gamma$ is isomorphic to the signed graph $(B(5,3,5),+)$, $(B(5,3,5),-)$ or $(B(4,4,5),-)$ by Lemma \ref{lem-2-11}. A simple observation shows that $N_1(v,C_6^\sigma)=\{x_1, x_2\}$ if $\Gamma[V(C_6^\sigma)\cup \{x_1, x_2\}\cup x']\cong (B(4,4,5),-)$. We consider the other two graphs and let $x\in N_1(v,C_6^\sigma)$ be different from $x_1$ and $x_2$. Similar to Subcase 1, $x$ must be adjacent to exactly one of $y_1, y_2$, $y_3$, $y_5$ in $(B(5,3,5),+)$. Hence $\Gamma$ is isomorphic to the signed graph $\Gamma_6$, where $a, b, c$ and $d$ are non-negative integers (see Fig. \ref{fig-4}). Moreover, if any one of $a, b, c$ is non-zero, then $d$ must be zero. Similarly, $x$ must be adjacent to exactly one of $y_1, y_2$, $y_4$, $y_5$, $y_6$ in $(B(5,3,5),-)$. Hence $\Gamma$ is isomorphic to the signed graph $\Gamma_7$, where $a, b, c ,d$ and $e$ are non-negative integers (see Fig. \ref{fig-4}). Moreover, at most three of $a, b, c ,d$ and $e$ are non-zero, and $a, b, c$ are all zero whenever one of $d, e$ is non-zero.

Assume $g=7$ and $N_1(v,C_7^\sigma)=\{x_1, x_2\}$. Then by Lemma \ref{lem-2-11}, $\Gamma$ is switching equivalent to $(B(5,4,5),+)$ or $(B(5,4,5),-)$ (see Fig. \ref{fig-4}). By a simple operation, we get $$i_-((B(5,4,5),+))=i_-((B(5,4,5),-))=5=\lceil\frac{7}{2}\rceil+1,$$ as desired.
If $N_1(v,C_7^\sigma)\neq \{x_1, x_2\}$ and set $x\in N_1(v,C_7^\sigma)\setminus \{x_1, x_2\}$, then $x$ must be adjacent to exactly one of $y_1, y_2$, $y_3, y_4$ and $y_5$ in $(B(5,4,5),+)$. Hence $\Gamma$ is isomorphic to the signed graph $\Gamma_8$, where $a, b, c, d$ and $e$ are non-negative integers (see Fig. \ref{fig-4}). Moreover, at most four of them are non-zero, and $a, b, c$ are all zero whenever $d$ and $e$ are both non-zero. Similarly, $x$ must be adjacent to exactly one of $y_1, y_2$, $y_3, y_4$, $y_5$, $y_6$ and $y_7$ in $(B(5,4,5),-)$. Then $\Gamma$ is isomorphic to the signed graph $\Gamma_9$, where $a, b, c, d, e, f, g$ are non-negative integers (see Fig. \ref{fig-4}). Moreover, at most three of them are non-zero, namely $a,b,c$. If exactly two of them are non-zero, then these two are either two from $a,b,c$, or one of $ae, ad, be, bd, fg$.

\begin{figure}[ht]
    \centering
    \includegraphics[width=0.95\linewidth]{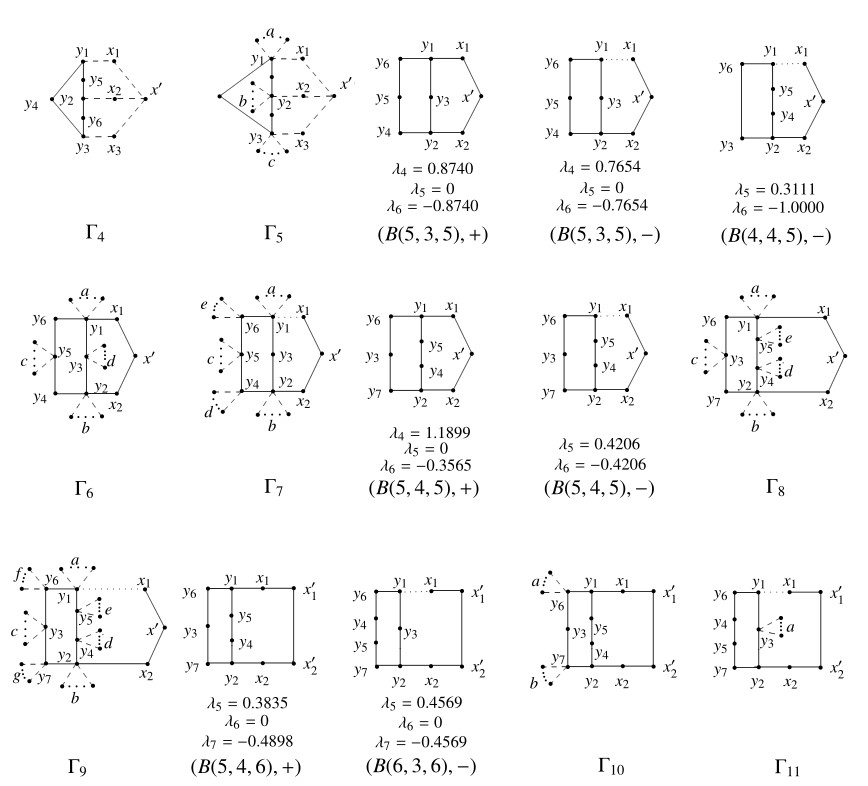}
    \caption{The signed graphs $\Gamma_4$, $\Gamma_5$, $(B(5,3,5),+)$, $(B(5,3,5),-)$, $(B(4,4,5),-)$, $\Gamma_6$, $\Gamma_7$, $(B(5,4,5),+)$, $(B(5,4,5),-)$, $\Gamma_4$, $\Gamma_8$, $\Gamma_9$, $(B(5,4,6),+)$, $(B(6,3,6),{\sigma_1})$, $\Gamma_{10}$ and  $\Gamma_{11}$}
    \label{fig-4}
    \end{figure}

{\flushleft\bf Case 2.} $N_2(v,C_g^\sigma)$ is not an independent set.

We may assume that $x_1', x_2'\in N_2(v,C_g^\sigma)$ and $x_1'\sim x_2'$ since $N_2(v,C_g^\sigma)$ is not an independent set. Let $x_i'\sim x_i\in N_1(v,C_g^\sigma)$ and $x_i\sim y_i$ for $i\in\{1,2\}$. Firstly, the following claim can be established.
\begin{claim}
$N_2(v, C_g^\sigma)]=\{x_1', x_2'\}$.
\end{claim}
\begin{proof}
Assume that $x_3'\in N_2(v, C_g^\sigma)$ besides $x_1'$ and $x_2'$. Then $x_3'$ cannot be adjacent to both $x_1$ and $x_2$ because $g\geq 6$. If $x_3'$ is adjacent to a vertex $x_3\in N_1(v,C_g^\sigma)\backslash\{x_1, x_2\}$, then
$$i_-(\Gamma)\geq i_-(\Gamma[V(C_g^\sigma)\cup\{x_1,x_3,x_1',x_3'\}])=i_-(\Gamma[V(C_g^\sigma)])+2=\lceil\frac{g}{2}\rceil+2>\left\lceil\frac{g}{2}\right\rceil+1.$$
If $x_3'$ is adjacent one of $x_1$ and $x_2$, say $x_2$, then
$$i_-(\Gamma)\geq i_-(\Gamma[V(C_g^\sigma)\cup\{x_1, x_2, x_1', x_3'\}])=i_-(\Gamma[V(C_g^\sigma)])+2=\lceil\frac{g}{2}\rceil+2>\left\lceil\frac{g}{2}\right\rceil+1.$$
 These two contradictions show $N_2(v, C_g^\sigma)=\{x'_1, x'_2\}$.
\end{proof}

\begin{figure}[ht]
    \centering
    \includegraphics[width=0.95\linewidth]{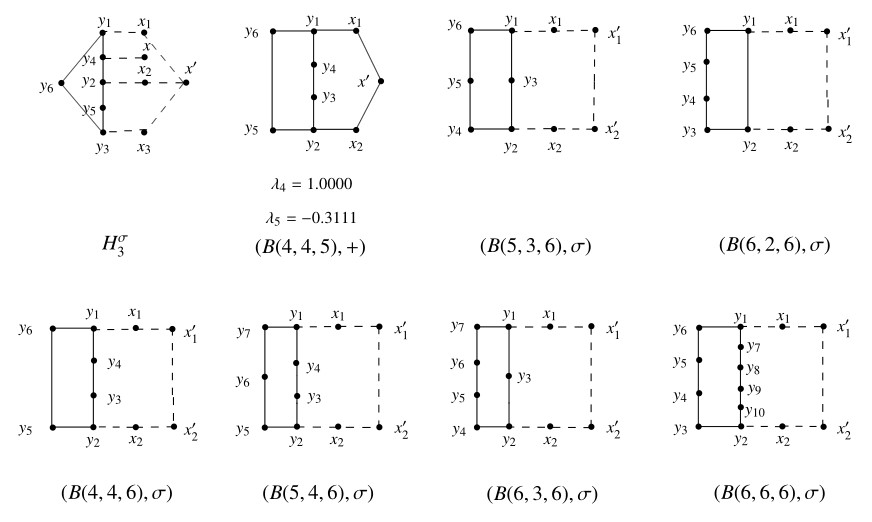}
    \caption{The signed graphs $H_3^\sigma$, $(B(4,4,5),+)$, $(B(5,3,6),\sigma)$, $(B(6,2,6),\sigma)$, $(B(4,4,6),\sigma)$, $(B(5,4,6),\sigma)$, $(B(6,3,6),\sigma)$ and $(B(6,6,6),\sigma)$ }
    \label{fig-3}
    \end{figure}

Since there exists a path of length 5 between $y_1$ and $y_2$ such that $P$ is vertex-disjoint from $C_g^\sigma$, we get $\left \lfloor \frac{g}{2}\right\rfloor+5\geq g$ by Lemma \ref{lem-2-5}, and so $g\leq 10$. Hence $g=6,7$ or $10$.

Assume $g=6$. Then $\Gamma[V(C_6^\sigma)\cup\{x_1,x_2,x_1',x_2'\}]\cong (B(5,3,6), \sigma)$, $(B(6,2,6),\sigma)$ or $ (B(4,4,6),\sigma)$ (see Fig. \ref{fig-3}). In order of $y_1, y_2, y_3, y_4, y_5, y_6, x_1, x_2, x_1', x_2'$, $A((B(5,3,6),\sigma))$ can be written as
$$A((B(5,3,6),\sigma))=\left(
        \begin{array}{cccccccccc}
          0 & 0 & 1 & 0 & 0 & 1 & a_{17}^\sigma & 0 & 0 & 0\\
          0 & 0 & 1 & 1 & 0 & 0 & 0 & a_{28}^\sigma & 0 & 0\\
          1 & 1 & 0 & 0 & 0 & 0 & 0 & 0 & 0 & 0\\
          0 & 1 & 0 & 0 & 1 & 0 & 0 & 0 & 0 & 0\\
          0 & 0 & 0 & 1 & 0 & 1 & 0 & 0 & 0 & 0\\
          1 & 0 & 0 & 0 & 1 & 0 & 0 & 0 & 0 & 0\\
          a_{17}^\sigma & 0 & 0 & 0 & 0 & 0 & 0 & 0 & a_{79}^\sigma & 0\\
          0 & a_{28}^\sigma & 0 & 0 & 0 & 0 & 0 & 0 & 0 & a_{810}^\sigma\\
          0 & 0 & 0 & 0 & 0 & 0 & a_{79}^\sigma & 0 & 0 & a_{910}^\sigma\\
          0 & 0 & 0 & 0 & 0 & 0 & 0 & a_{810}^\sigma & a_{910}^\sigma & 0\\
        \end{array}
      \right).$$
Applying elementary congruence matrix operations to $A((B(5,3,6),\sigma))$, it is congruent to
$$B_3=\left(
        \begin{array}{cccccccccccc}
          0 & 1 & 0 & 0 & 0 & 0 & 0 & 0 & 0 & 0\\
          1 & 0 & 0 & 0 & 0 & 0 & 0 & 0 & 0 & 0\\
          0 & 0 & 0 & 1 & 0 & 0 & 0 & 0 & 0 & 0\\
          0 & 0 & 1 & 0 & 0 & 0 & 0 & 0 & 0 & 0\\
          0 & 0 & 0 & 0 & 0 & 2 & 0 & 0 & 0 & 0\\
          0 & 0 & 0 & 0 & 2 & 0 & 0 & 0 & 0 & 0\\
          0 & 0 & 0 & 0 & 0 & 0 & 0 & a_{810}^\sigma & 0 & 0\\
          0 & 0 & 0 & 0 & 0 & 0 & a_{810}^\sigma & 0 & 0 & 0\\
          0 & 0 & 0 & 0 & 0 & 0 & 0 & 0 & 0 & a_{79}^\sigma\\
          0 & 0 & 0 & 0 & 0 & 0 & 0 & 0& a_{79}^\sigma & 0\\
        \end{array}
      \right).$$

The matrix $B_3$ has five negative eigenvalues, a contradiction $i_-((B(5,3,6),\sigma))=5>\lceil\frac{6}{2}\rceil+1=4$ follows from Lemma \ref{lem-2-0}.

In order of $y_1, y_2, y_3, y_4, y_5, y_6, x_1, x_2, x_1', x_2'$, $A((B(6,2,6),\sigma))$ can be written as
$$A((B(6,2,6),\sigma))=\left(
        \begin{array}{cccccccccc}
          0 & 1 & 0 & 0 & 0 & 1 & a_{17}^\sigma & 0 & 0 & 0\\
          1 & 0 & 1 & 0 & 0 & 0 & 0 & a_{28}^\sigma & 0 & 0\\
          0 & 1 & 0 & 1 & 0 & 0 & 0 & 0 & 0 & 0\\
          0 & 0 & 1 & 0 & 1 & 0 & 0 & 0 & 0 & 0\\
          0 & 0 & 0 & 1 & 0 & 1 & 0 & 0 & 0 & 0\\
          1 & 0 & 0 & 0 & 1 & 0 & 0 & 0 & 0 & 0\\
          a_{17}^\sigma & 0 & 0 & 0 & 0 & 0 & 0 & 0 & a_{79}^\sigma & 0\\
          0 & a_{28}^\sigma & 0 & 0 & 0 & 0 & 0 & 0 & 0 & a_{810}^\sigma\\
          0 & 0 & 0 & 0 & 0 & 0 & a_{79}^\sigma & 0 & 0 & a_{910}^\sigma\\
          0 & 0 & 0 & 0 & 0 & 0 & 0 & a_{810}^\sigma & a_{910}^\sigma & 0\\
        \end{array}
      \right).$$
Applying elementary congruence matrix operations to $A((B(6,2,6),\sigma))$, it is congruent to
$$
\setlength{\arraycolsep}{9pt}B_4=\left(
        \begin{array}{cccccccccccc}
          0 & 1 & 0 & 0 & 0 & 0 & 0 & 0 & 0 & 0\\
          1 & 0 & 0 & 0 & 0 & 0 & 0 & 0 & 0 & 0\\
          0 & 0 & 0 & 1 & 0 & 0 & 0 & 0 & 0 & 0\\
          0 & 0 & 1 & 0 & 0 & 0 & 0 & 0 & 0 & 0\\
          0 & 0 & 0 & 0 & 0 & 2 & 0 & 0 & 0 & 0\\
          0 & 0 & 0 & 0 & 2 & 0 & 0 & 0 & 0 & 0\\
          0 & 0 & 0 & 0 & 0 & 0 & 0 & \frac{-a_{17}^\sigma a_{28}^\sigma}{2} & 0 & 0\\
          0 & 0 & 0 & 0 & 0 & 0 & \frac{-a_{17}^\sigma a_{28}^\sigma}{2} & 0 & 0 & 0\\
          0 & 0 & 0 & 0 & 0 & 0 & 0 & 0 & 0 & a_{910}^\sigma+\frac{2a_{79}^\sigma a_{810}^\sigma}{a_{17}^\sigma a_{28}^\sigma}\\
          0 & 0 & 0 & 0 & 0 & 0 & 0 & 0& a_{910}^\sigma+\frac{2a_{79}^\sigma a_{810}^\sigma}{a_{17}^\sigma a_{28}^\sigma} & 0\\
        \end{array}
      \right).$$
Since $\frac{2a_{79}^\sigma a_{810}^\sigma}{a_{17}^\sigma a_{28}^\sigma}\not=1,-1$, the matrix $B_4$ has five negative eigenvalues, a contradiction $i_-((B\\(6,2,6),\sigma))=5>\lceil\frac{6}{2}\rceil+1=4$ arises by Lemma \ref{lem-2-0}.

At last,
note that $A((B(4,4,6),\sigma))$ is congruent to
$$
\setlength{\arraycolsep}{10pt}B_5=\left(
        \begin{array}{cccccccccccc}
          0 & 1 & 0 & 0 & 0 & 0 & 0 & 0 & 0 & 0\\
          1 & 0 & 0 & 0 & 0 & 0 & 0 & 0 & 0 & 0\\
          0 & 0 & 0 & 1 & 0 & 0 & 0 & 0 & 0 & 0\\
          0 & 0 & 1 & 0 & 0 & 0 & 0 & 0 & 0 & 0\\
          0 & 0 & 0 & 0 & 0 & 2 & 0 & 0 & 0 & 0\\
          0 & 0 & 0 & 0 & 2 & 0 & 0 & 0 & 0 & 0\\
          0 & 0 & 0 & 0 & 0 & 0 & 0 & \frac{a_{17}^\sigma a_{28}^\sigma}{2} & 0 & 0\\
          0 & 0 & 0 & 0 & 0 & 0 & \frac{a_{17}^\sigma a_{28}^\sigma}{2} & 0 & 0 & 0\\
          0 & 0 & 0 & 0 & 0 & 0 & 0 & 0 & 0 & a_{910}^\sigma-\frac{2a_{79}^\sigma a_{810}^\sigma}{a_{17}^\sigma a_{28}^\sigma}\\
          0 & 0 & 0 & 0 & 0 & 0 & 0 & 0 & a_{910}^\sigma-\frac{2a_{79}^\sigma a_{810}^\sigma}{a_{17}^\sigma a_{28}^\sigma} & 0\\
        \end{array}
      \right).$$
Since $\frac{2a_{79}^\sigma a_{810}^\sigma}{a_{17}^\sigma a_{28}^\sigma}\not=1,-1$, the matrix $B_5$ has five negative eigenvalues, a contradiction $i_-((B$ $(4,4,6),\sigma))=5>\lceil\frac{6}{2}\rceil+1=4$ arises by Lemma \ref{lem-2-0}.

Assume $g=7$. Then $\Gamma[V(C_7^\sigma)\cup\{x_1,x_2,x_1',x_2'\}]\cong(B(5,4,6),\sigma)$ or $(B(6,3,6),\sigma)$ (see Fig. \ref{fig-3}). In order of $y_1, y_2, y_3, y_4, y_5, y_6, y_7, x_1, x_2, x_1', x_2'$, it can be written as
$$A((B(5,4,6),\sigma))=\left(
          \begin{array}{ccccccccccc}
          0 & 0 & 0 & 1 & 0 & 0 & 1 & a_{18}^\sigma & 0 & 0 & 0\\
          0 & 0 & 1 & 0 & 1 & 0 & 0 & 0 & a_{29}^\sigma & 0 & 0\\
          0 & 1 & 0 & 1 & 0 & 0 & 0 & 0 & 0 & 0 & 0\\
          1 & 0 & 1 & 0 & 0 & 0 & 0 & 0 & 0 & 0 & 0\\
          0 & 1 & 0 & 0 & 0 & 1 & 0 & 0 & 0 & 0 & 0\\
          0 & 0 & 0 & 0 & 1 & 0 & 1 & 0 & 0 & 0 & 0\\
          1 & 0 & 0 & 0 & 0 & 1 & 0 & 0 & 0 & 0 & 0\\
          a_{18}^\sigma & 0 & 0 & 0 & 0 & 0 & 0 & 0 & 0 & a_{810}^\sigma & 0\\
          0 & a_{29}^\sigma & 0 & 0 & 0 & 0 & 0 & 0 & 0 & 0 & a_{911}^\sigma\\
          0 & 0 & 0 & 0 & 0 & 0 & 0 & a_{811}^\sigma & 0 & 0 & a_{1011}^\sigma\\
          0 & 0 & 0 & 0 & 0 & 0 & 0 & 0 & a_{911}^\sigma & a_{1011}^\sigma & 0\\
        \end{array}
      \right).$$

Applying elementary congruence matrix operations to $A((B(5,4,6),\sigma))$, it is congruent to
$$
\setlength{\arraycolsep}{12pt}
\left(
        \begin{array}{ccccccccccccc}
          0 & 1 & 0 & 0 & 0 & 0 & 0 & 0 & 0 & 0 & 0\\
          1 & 0 & 0 & 0 & 0 & 0 & 0 & 0 & 0 & 0 & 0\\
          0 & 0 & 0 & 1 & 0 & 0 & 0 & 0 & 0 & 0 & 0\\
          0 & 0 & 1 & 0 & 0 & 0 & 0 & 0 & 0 & 0 & 0\\
          0 & 0 & 0 & 0 & 0 & 1 & 0 & 0 & 0 & 0 & 0\\
          0 & 0 & 0 & 0 & 1 & 0 & 0 & 0 & 0 & 0 & 0\\
          0 & 0 & 0 & 0 & 0 & 0 & -2 & 0 & 0 & 0 & 0\\
          0 & 0 & 0 & 0 & 0 & 0 & 0 & \frac{{a_{18}^\sigma}^2}{2} & 0 & 0 & 0\\
          0 & 0 & 0 & 0 & 0 & 0 & 0 & 0 & 0 & \frac{-{a_{29}^\sigma}{a_{810}^\sigma}}{{a_{18}^\sigma}} & 0\\
          0 & 0 & 0 & 0 & 0 & 0 & 0 & 0 & \frac{-{a_{29}^\sigma}{a_{810}^\sigma}}{{a_{18}^\sigma}} & 0 & 0 \\
          0 & 0 & 0 & 0 & 0 & 0 & 0 & 0 & 0 & 0 & \lambda_1\\
        \end{array}
      \right),$$
where $\lambda_1=\frac{2{a_{18}^\sigma}{a_{911}^\sigma}{a_{1011}^\sigma}}{{a_{29}^\sigma}{a_{810}^\sigma}}$. Then $i_-(B(5,4,6),\sigma)=5=\lceil \frac{7}{2}\rceil+1$ if and only if $a_{18}^\sigma a_{911}^\sigma a_{1011}^\sigma$ and $a_{29}^\sigma a_{810}^\sigma$ have the same sign. All $16$ signed graphs satisfying the same sign condition are switching equivalent to $(B(5,4,6),+)$ (see Fig. \ref{fig-4}). Let $x\in N_1(v,C_7^\sigma)\setminus \{x_1, x_2\}$ (if it exists). Then $x$ must be adjacent to exactly one of $y_6$ and $y_7$ in $(B(5,4,6),+)$. Hence $N_1(v,C_7^\sigma)\setminus \{x_1, x_2\}$ is an independent set. We get $\Gamma\cong \Gamma_{10}$ (see Fig. \ref{fig-4}), where $a$ and $b$ are non-negative integers. 

Similarly, applying elementary congruence matrix operations to $A((B(6,3,6),\sigma))$, it is congruent to
$$
\setlength{\arraycolsep}{12pt}\left(
        \begin{array}{ccccccccccccc}
          0 & -1 & 0 & 0 & 0 & 0 & 0 & 0 & 0 & 0 & 0\\
          -1 & 0 & 0 & 0 & 0 & 0 & 0 & 0 & 0 & 0 & 0\\
          0 & 0 & 0 & 1 & 0 & 0 & 0 & 0 & 0 & 0 & 0\\
          0 & 0 & 1 & 0 & 0 & 0 & 0 & 0 & 0 & 0 & 0\\
          0 & 0 & 0 & 0 & 0 & 1 & 0 & 0 & 0 & 0 & 0\\
          0 & 0 & 0 & 0 & 1 & 0 & 0 & 0 & 0 & 0 & 0\\
          0 & 0 & 0 & 0 & 0 & 0 & -2 & 0 & 0 & 0 & 0\\
          0 & 0 & 0 & 0 & 0 & 0 & 0 & \frac{{a_{18}^\sigma}^2}{2} & 0 & 0 & 0\\
          0 & 0 & 0 & 0 & 0 & 0 & 0 & 0 & 0 & \frac{{a_{29}^\sigma}{a_{810}^\sigma}}{{a_{18}^\sigma}} & 0\\
          0 & 0 & 0 & 0 & 0 & 0 & 0 & 0 & \frac{{a_{29}^\sigma}{a_{810}^\sigma}}{{a_{18}^\sigma}} & 0 & 0 \\
          0 & 0 & 0 & 0 & 0 & 0 & 0 & 0 & 0 & 0 & \lambda_2\\
        \end{array}
      \right),$$
where $\lambda_2=\frac{-2{a_{18}^\sigma}{a_{911}^\sigma}{a_{1011}^\sigma}}{{a_{29}^\sigma}{a_{810}^\sigma}}$. Then $i_-((B(6,3,6),\sigma))=5=\lceil \frac{7}{2}\rceil+1$ if and only if $a_{18}^\sigma a_{911}^\sigma a_{1011}^\sigma$ and $a_{29}^\sigma a_{810}^\sigma$ have opposite signs. All $16$ signed graphs satisfying the opposite sign condition are switching equivalent to $(B(6,3,6),-)$. Let $x\in N_1(v,C_7^\sigma)\setminus \{x_1, x_2\}$ (if it exists). Then $x$ is only adjacent to $y_3$ in $(B(6,3,6),-)$. Hence $N_1(v,C_7^\sigma)\setminus \{x_1, x_2\}$ is an independent set. We get $\Gamma\cong \Gamma_{11}$ (see Fig. \ref{fig-4}), where $a$ is a non-negative integers.

Assume $g=10$. Then $\Gamma[V(C_{10}^\sigma)\cup\{x_1,x_2,x_1',x_2'\}]\cong (B(6,6,6),\sigma)$ (see Fig. \ref{fig-3}). In order of $y_1, y_2, y_3, y_4, y_5, y_6, y_7, y_8$, $ y_9, y_{10},x_1, x_2, x_1',x_2' $, $A((B(6,6,6),\sigma))$ can be written as\\
$$A((B(6,6,6),\sigma))=\left(
        \begin{array}{cccccccccccccc}
          0 & 0 & 0 & 0 & 0 & 1 & 1 & 0 & 0 & 0 & a_{111}^\sigma & 0 & 0 & 0\\
          0 & 0 & 1 & 0 & 0 & 0 & 0 & 0 & 0 & 1 & 0 & a_{212}^\sigma & 0 & 0\\
          0 & 1 & 0 & 1 & 0 & 0 & 0 & 0 & 0 & 0 & 0 & 0 & 0 & 0\\
          0 & 0 & 1 & 0 & 1 & 0 & 0 & 0 & 0 & 0 & 0 & 0 & 0 & 0\\
          0 & 0 & 0 & 1 & 0 & 1 & 0 & 0 & 0 & 0 & 0 & 0 & 0 & 0\\
          1 & 0 & 0 & 0 & 1 & 0 & 0 & 0 & 0 & 0 & 0 & 0 & 0 & 0\\
          1 & 0 & 0 & 0 & 0 & 0 & 0 & 1 & 0 & 0 & 0 & 0 & 0 & 0\\
          0 & 0 & 0 & 0 & 0 & 0 & 1 & 0 & 1 & 0 & 0 & 0 & 0 & 0\\
          0 & 0 & 0 & 0 & 0 & 0 & 0 & 1 & 0 & 1 & 0 & 0 & 0 & 0\\
          0 & 1 & 0 & 0 & 0 & 0 & 0 & 0 & 1 & 0 & 0 & 0 & 0 & 0\\
          a_{111}^\sigma & 0 & 0 & 0 & 0 & 0 & 0 & 0 & 0 & 0 & 0 & 0 & a_{1113}^\sigma & 0\\
          0 & a_{212}^\sigma & 0 & 0 & 0 & 0 & 0 & 0 & 0 & 0 & 0 & 0 & 0 & a_{1214}^\sigma\\
          0 & 0 & 0 & 0 & 0 & 0 & 0 & 0 & 0 & 0 & a_{1113}^\sigma & 0 & 0 & a_{1314}^\sigma\\
          0 & 0 & 0 & 0 & 0 & 0 & 0 & 0 & 0 & 0 & 0 & a_{1214}^\sigma & a_{1314}^\sigma & 0\\
        \end{array}
      \right).$$
Applying elementary congruence matrix operations on $A((B(6,6,6),\sigma))$, it is congruent to
$$ \setlength{\arraycolsep}{10pt}\left(
        \begin{array}{cccccccccccccc}
          0 & -1 & 0 & 0 & 0 & 0 & 0 & 0 & 0 & 0 & 0 & 0 & 0 & 0\\
          -1 & 0 & 0 & 0 & 0 & 0 & 0 & 0 & 0 & 0 & 0 & 0 & 0 & 0\\
          0 & 0 & 0 & 1 & 0 & 0 & 0 & 0 & 0 & 0 & 0 & 0 & 0 & 0\\
          0 & 0 & 1 & 0 & 0 & 0 & 0 & 0 & 0 & 0 & 0 & 0 & 0 & 0\\
          0 & 0 & 0 & 0 & 0 & 1 & 0 & 0 & 0 & 0 & 0 & 0 & 0 & 0\\
          0 & 0 & 0 & 0 & 1 & 0 & 0 & 0 & 0 & 0 & 0 & 0 & 0 & 0\\
          0 & 0 & 0 & 0 & 0 & 0 & 0 & 1 & 0 & 0 & 0 & 0 & 0 & 0\\
          0 & 0 & 0 & 0 & 0 & 0 & 1 & 0 & 0 & 0 & 0 & 0 & 0 & 0\\
          0 & 0 & 0 & 0 & 0 & 0 & 0 & 0 & 0 & 2 & 0 & 0 & 0 & 0\\
          0 & 0 & 0 & 0 & 0 & 0 & 0 & 0 & 2 & 0 & 0 & 0 & 0 & 0\\
          0 & 0 & 0 & 0 & 0 & 0 & 0 & 0 & 0 & 0 & 0 & \frac{-a_{111}^\sigma a_{212}^\sigma}{2} & 0 & 0\\
          0 & 0 & 0 & 0 & 0 & 0 & 0 & 0 & 0 & 0 & \frac{-a_{111}^\sigma a_{212}^\sigma}{2} & 0 & 0 & 0\\
          0 & 0 & 0 & 0 & 0 & 0 & 0 & 0 & 0 & 0 & 0 & 0 & 0 & \lambda_3\\
          0 & 0 & 0 & 0 & 0 & 0 & 0 & 0 & 0 & 0 & 0 & 0 & \lambda_3 & 0\\
        \end{array}
      \right),$$
where $\lambda_3=\frac{2a_{1113}^\sigma a_{1214}^\sigma}{a_{111}^\sigma a_{212}^\sigma}+{a_{1314}^\sigma}$. Since $\frac{2a_{1113}^\sigma a_{1214}^\sigma}{a_{111}^\sigma a_{212}^\sigma}\not=1,-1$, $i_-((B(6,6,6),\sigma))=7>\lceil\frac{10}{2}\rceil+1=6$ following from Lemma \ref{lem-2-0}, a contradiction. We are done.
\end{proof}

\end{document}